\documentclass[11pt,a4paper]{article} 
\usepackage{epsf,epsfig,amsfonts,amsgen,amsmath,amstext,amsbsy,amsopn,amsthm}
\usepackage{amsmath}
\usepackage{amsfonts,amsthm,amssymb,bm}
\usepackage{amsfonts}
\usepackage{graphics}
\usepackage{latexsym,bm}
\usepackage{amsfonts,amsthm,amssymb,bbding}
\usepackage{indentfirst}
\usepackage{graphicx}
\usepackage{color}
\usepackage[colorlinks=true,anchorcolor=blue,filecolor=blue,linkcolor=red,urlcolor=blue,citecolor=blue]{hyperref}     
\usepackage{float}
\usepackage{tikz,enumerate}
\usepackage{geometry} 
\newtheorem{thm}{Theorem}[section]
\newtheorem{conj}[thm]{Conjecture}
\newtheorem{lemma}[thm]{Lemma}
\newtheorem{claim}{Claim}[section]

\newtheorem{defn}[thm]{Definition}

\renewcommand{\le}{\leqslant}
\renewcommand{\geq}{\geqslant}
\renewcommand{\ge}{\geqslant}

\begin{document}

\title{On a conjecture of distance spectral extremal problems}

\author{
Hongzhang Chen\thanks{School of Mathematics and Statistics, 
Gansu Center for Applied Mathematics, 
Lanzhou University, Lanzhou, Gansu, 730000, China. Email: \url{mnhzchern@gmail.com}.} 
\and 
Jianxi Li\thanks{School of Mathematics and Statistics, Minnan Normal University, Zhangzhou, Fujian, 363000, China. 
Email: \url{ptjxli@hotmail.com}. Partially supported by the NSF of Fujian Province (No. 2021J02048).}
\and 
Yongtao Li\thanks{Corresponding author. Yau Mathematical Sciences Center, Tsinghua University, Beijing, 100084, China. 
Email: \url{ytli0921@hnu.edu.cn}.}
}

\date{\today}
\maketitle

\begin{abstract}
Brualdi and Hoffman proposed a well-known problem of determining the graph with maximum adjacency spectral radius among all graphs with given size $m$. Early work by Friedland and Stanley addressed some specific cases.  
This problem was later completely solved by Rowlinson and recently revisited by Cheng and Weng. 
Pioneering work on the distance matrix was  carried out by Graham and Pollak, as well as by Graham and Lov\'{a}sz. 
The distance spectral radius $\rho (G)$ of a connected graph $G$ is the largest eigenvalue of its distance matrix. 

In this paper, we completely solve the problem of characterizing the connected graph with minimum distance spectral radius among all graphs with size $m$.  
Let $\mathcal{G}(m)$ be the class of connected graphs with $m$ edges.  
For every $m \ge 3$, let $n$ be the unique integer satisfying $\binom{n-1}{2} < m \le \binom{n}{2}$, and we write $m = \binom{n-1}{2} + s$ with $1 \le s \le n-1$. Recently, Lin and Zhou [\emph{Adv. in Appl. Math. 173 (2026)}] investigated the graph in $\mathcal{G}(m)$ that minimizes $\rho(G)$ in the range $s \ge \max\{ \frac{n-6}{2}, 1\}$. However, the problem is much more difficult in the remaining range $1 \le s \le \frac{n-7}{2}$, and they conjectured that the unique minimizer is $\overline{P_{n,s+1}}$, the complement of a balanced disjoint union of paths. Using novel matrix analysis, we solve this conjecture in the affirmative. Moreover, we provide a new unified proof for the entire range $1 \le s \le n-1$.

The key ingredients in our proof argument include an innovative comparison principle for the distance spectral radius, an increment analysis of $\Phi$-functions on paths and cycles, an argument for balancing path lengths, and a walk enumeration technique via the Neumann series.    
\end{abstract}

{\bf Key words:} Distance spectral radius, Lin--Zhou's conjecture.

{\bf 2020 AMS Subject Classifications:} 05C50, 05C35.

\section{Introduction} 
Let $G$ be a simple connected graph with vertex set $V(G)$ and edge set $E(G)$. 
We denote by $n$ and $e(G)$ the \emph{order} and \emph{size} of $G$, respectively.  
The complement of $G$ is denoted as $\overline G$. For any vertex $u\in V(G)$, let
$d_{G}(u)$ and $N_{G}(u)$ (or $d(u)$ and $N(u)$ for short) be the degree and the set of neighbors of $u$, respectively. Clearly, $d_{G}(u)=|N_{G}(u)|$. 
We write $\mathbf 1$ for the all-ones column vector and $J=\mathbf 1\mathbf 1^{\mathsf{T}}$ for the all-ones matrix. Let $I$ be the identity matrix. 
We denote by $C_{\ell}$ the cycle on $\ell$ vertices, and $P_k$ the path on $k$ vertices. 
The adjacency matrix of an $n$-vertex graph $G$ is defined as $A(G)=[a_{ij}]_{i,j=1}^n$, where $a_{ij}=1$ if $ij\in E(G)$, and $a_{ij=0}$ otherwise. The spectral radius $\lambda (G)$ of a graph $G$ is defined as the largest eigenvalue of its adjacency matrix $A(G)$.

\subsection{The Brualdi--Hoffman type extremal problem}

The study of bounding the spectral radius of a graph in terms of the number of edges has a long and rich history. It is well-known that $\lambda (G)< \sqrt{2m}$. Tracking back to 1985, 
 Brualdi and Hoffman \cite{BH1985} proved that if $G$ has $m\le {k \choose 2}$ edges for some integer $k\ge 2$, then $\lambda (G)\le k-1$. This bound was further extended by Friedland \cite{Fri1985}, and later by Stanley \cite{Sta1987} who showed that $\lambda (G)\le \frac{1}{2} (\sqrt{8m+1}-1)$, with equality if and only if $m={k \choose 2}$ and $G$ is a complete graph $K_k$.  Here, we ignore the possible isolated vertices if there are no confusions.  
  We write $m={s \choose 2} + t$ for some integers $s,t$ with $0\le t \le s-1$. 
Let $K_1 \vee_t K_s$ be the graph obtained from $K_s$ by adding a new vertex joining 
$t$ vertices of $K_s$. In 1988, Rowlinson \cite{Row1988} proved that if $G$ has $m$ edges, then 
$\lambda (G) \le \lambda (K_1\vee_t K_s)$, 
with equality if and only if $G=K_1\vee_t K_s$. 
This confirms a conjecture proposed by Brualdi and Hoffman \cite{BH1985}. Roughly speaking, 
the graph with the largest adjacency spectral radius packs almost all edges into the clique, and then attaches the remaining edges to that clique. 
We refer to \cite{CW2025} for a matrix realization of these spectral bounds.

\smallskip
Motivated by the Tur\'{a}n theorem, Nikiforov \cite{Niki2002cpc} 
 significantly improved the above bounds by proving that if $G$ is a $K_{r+1}$-free graph, then $\lambda^2 (G)\le (1-\frac{1}{r})2m$. In particular, this recovers a result of Nosal, saying that every $m$-edge triangle-free $G$ satisfies $\lambda (G)\le \sqrt{m}$. 
 For related results, we refer the readers to \cite{LZS2024,LZZ2025, LLZ2024-book-4-cycle}. 
 Recently, Li, Liu and Zhang \cite{LLZ-edge-spectral} proved that for any graph $F$ with chromatic number $r+1\ge 3$, if $G$ is an $F$-free graph with $m$ edges, then $\lambda^2 (G) \le \big(1- \frac{1}{r} +o(1) \big) 2m$. Extending Nikiforov's theorem, Li, Liu and Zhang  \cite{LLZ-edge-color-critical} proved that the $o(1)$ error term can be removed for color-critical graphs $F$; and $\lambda (G)\le \sqrt{m} + O(1)$ for almost-bipartite graphs $F$.

\subsection{The distance spectral extremal problem}

The distance between two vertices $u,v$, denoted by $d_G(u,v)$, is the length of a shortest path connecting them. The {\it distance matrix} of $G$ is defined as $D(G)=(d_G(u,v))_{u,v\in V(G)}$. 
The {\it distance spectral radius} $\rho(G)$ is defined as the largest eigenvalue of $D(G)$. 
Note that $D(G)$ is nonnegative and irreducible. By the Perron--Frobenius theorem, $\rho(G)$ is a positive simple eigenvalue, and there exists a unique positive unit eigenvector corresponding to $\rho (G)$. 
The study of eigenvalues of the distance matrix of a connected graph dates back to the classical work \cite{GP1971, GHH1977,GL1978}, where they provided an inequality related the number of negative distance eigenvalues
to the addressing problem in data communication system. 
The properties of distance matrix turn out to be significant in the study of the isometric embedding problems from a connected graph to the squashed hypercube; see \cite{WIS2012, Saw2016, Alon2021}. 
We refer to the comprehensive surveys~\cite{ah14,LSXZ2021} for recent development.

\smallskip
For connected graphs of fixed order $n$, the extremal problems are classical: Ruzieh and Powers \cite{RP1990} and Stevanovi\'c and Ili\'c \cite{SI2010} showed that the complete graph $K_n$ uniquely minimizes $\rho$, while the path $P_n$ uniquely maximizes $\rho$. 
In contrast, the adjacency spectral radius exhibits the opposite behavior: the complete graph $K_n$ is the unique maximizer, and the path $P_n$ is the unique minimizer. 
Here, we provide an explanation for this reversed phenomenon. 
The adjacency spectral radius $\lambda (G)$ measures how well a graph expands or how strongly it is connected: a large value indicates a dense  graph with many edges and small diameter (e.g., a clique). In contrast, the distance spectral radius $\rho(G)$ measures the overall separation among vertices: a large value means many pairs of vertices are far apart, so the graph is sparse (e.g., a path or a star). Hence, dense graphs tend to have large $\lambda (G)$ but small $\rho(G)$, while sparse graphs tend to have small $\lambda (G)$ but large $\rho(G)$.

\smallskip 
A much harder problem is to find the extremal graphs among all connected graphs with a prescribed number of edges $m$, which was initiated by Lin and Zhou~\cite{LZ2026}. Let $\mathcal G(m)$ denote the class of such graphs. Throughout this paper, for a given $m \ge 3$, we denote 
\begin{equation} \label{eq-n-s}
n :=\Bigl\lceil \tfrac{1+\sqrt{8m+1}}{2}\Bigr\rceil, \qquad s:=m-\binom{n-1}{2}\in\{1,2,\ldots,n-1\}.
\end{equation}
For positive integers $N\ge c\ge 1$, we define  
$$
P_{N,c} \, :=\;\bigl(c+c\lfloor N/c\rfloor - N\bigr)\,P_{\lfloor N/c\rfloor}\;\cup\;\bigl(N-c\lfloor N/c\rfloor\bigr)\,P_{\lceil N/c\rceil},
$$
the {\it balanced disjoint union} of $c$ paths on $N$ vertices (its parts differ in order by at most one). 
By the definition, it is easy to verify that $\overline{P_{n,s+1}}$ contains 
${n \choose 2} - (n-s-1) = {n-1 \choose 2} +s =m$ edges.  Intuitively, we see that $\overline{P_{n,s+1}}$ is close to almost-complete graph, which is obtained from the largest possible  $K_n$ by deleting the edges of those $s+1$ balanced vertex-disjoint paths.

\smallskip
Recently, Lin and Zhou~\cite[Theorem 1.3]{LZ2026} proved that if $\max\{ \frac{n-6}{2},1\}\le s\le n-1$, then $\overline{P_{n,s+1}}$ is the unique graph that minimizes the distance spectral radius over all graphs in $\mathcal G(m)$. In the concluding remarks of~\cite{LZ2026}, they conjectured that the same formula holds for all small $s$. 

\begin{conj}[Lin--Zhou \cite{LZ2026}]\label{conj-LZ}
Let $G$ be a graph with $m$ edges, and let $n$ and $s$ be defined as in (\ref{eq-n-s}). If
$1\le s\le \frac{n-7}{2}$, then 
$
\rho(G)\ge\rho\bigl(\overline{P_{n,s+1}}\bigr),
$ 
with equality if and only if $G\cong \overline{P_{n,s+1}}$.
\end{conj}

In this paper, we resolve Conjecture \ref{conj-LZ} by showing the following result.

\begin{thm}\label{main}
Let $G\in\mathcal G(m)$ minimize $\rho$ of graphs  in $\mathcal G(m)$, and let $n=\lceil \frac{1}{2}(1+\sqrt{8m+1})\rceil$ and $s=m-\binom{n-1}{2}$. 
If $1\le s < \frac{n-2}{2}$, then $G\cong \overline{P_{n,s+1}}$. In particular, Conjecture \ref{conj-LZ} holds.
\end{thm}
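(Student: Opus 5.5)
I would prove the statement in two stages. First I pin down the structure of a minimizer. Then I optimize within that structure by comparing distance spectral radii directly.

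\textbf{Stage 1: a minimizer has diameter at most $2$ and exactly $n$ vertices.} Let $G\in\mathcal G(m)$ minimize $\rho$, with $N=|V(G)|$. Since $m>\binom{n-1}{2}$, we have $N\ge n$.
\begin{itemize}
\item \emph{Diameter.} If $G$ had diameter at least $3$, I would delete an edge that is not a bridge and add an edge between two vertices at distance at least $3$. Using the Perron vector $x$ and the Rayleigh quotient, I would check that $\rho$ strictly decreases. This requires choosing the edge so that the distance increases it causes are dominated by the drop from $3$ to $1$. This is the standard argument already used by Lin--Zhou, so I would take it as given.
\item \emph{Order.} Once $D(G)=2J-2I-A(G)$, a graph on $N>n$ vertices has $\binom N2-m\ge N-1+\dots$ non-edges. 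For $N\ge n+1$ I would bound $\rho\ge \mathbf 1^{\mathsf T}D\mathbf 1/N=2(N-1)-2m/N$. I would compare this with an upper bound for $\rho(\overline{P_{n,s+1}})$ of roughly $n-1+2(n-s-1)/n+O(1/n)$, and show that the extra vertex costs more.
\end{itemize}
So $G=\overline H$, where $H$ has $n$ vertices and exactly $n-1-s$ edges, and $\overline H$ has diameter $2$. Then $D(G)=J-I+A(H)$ and $\rho(G)=\lambda_{\max}(J-I+A(H))$.

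\textbf{Stage 2: optimizing over $H$.} I want to minimize $\rho(J+A(H))$ over graphs $H$ with $n-1-s$ edges. Since $s<\frac{n-2}{2}$, $H$ is fairly sparse but has more than half of $n$ edges. For $\mu>\lambda(H)$, the secular equation gives that $\mu$ is an eigenvalue of $J+A(H)$ iff
\[
1=\mathbf 1^{\mathsf T}(\mu I-A(H))^{-1}\mathbf 1=\sum_{k\ge0}\frac{w_k(H)}{\mu^{k+1}},
\]
where $w_k$ counts walks of length $k$ in $H$ (Neumann series). Hence $\rho$ is monotone in the walk-generating function $\Phi_H(\mu)$: the smaller the walk counts at $\mu\approx n$, the smaller $\rho$. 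We have $w_0=n$ and $w_1=2e(H)$, both fixed. Next, $w_2=\sum d^2$ is minimized when degrees are as equal as possible, i.e.\ all degrees at most $2$ and as few vertices of degree $2$ as possible. Because $e(H)=n-1-s<n$, the graph $H$ can be a linear forest. So the steps are:
\begin{enumerate}
\item A comparison principle: if $\Phi_{H}(\mu)<\Phi_{H'}(\mu)$ for all $\mu$ in a neighbourhood of $\rho$, then $\rho(\overline H)<\rho(\overline{H'})$.
\item Show that any $H$ with a vertex of degree at least $3$ loses to some linear forest. The $w_2$ difference, of order $1/\mu^3\sim n^{-3}$, dominates the tail, because higher terms are $O(n^{-4}\cdot w_3)$ and $w_3\le O(n)$ is controlled.
\item Show cycles lose to paths. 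Replacing a cycle component by a path with the same number of edges adds an isolated vertex or lengthens another component. I would compare the increments of $\Phi$ for cycles versus paths.
\item Among linear forests with $n$ vertices and $n-1-s$ edges, which have exactly $s+1$ components (counting isolated vertices as $P_1$), show that balancing lengths strictly decreases $\Phi$. For paths with $a\ge b+2$ vertices, replacing $P_a\cup P_b$ by $P_{a-1}\cup P_{b+1}$ should decrease $\Phi_{P_a}+\Phi_{P_b}$.
\end{enumerate}
Step 4 gives $H\cong P_{n,s+1}$.

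\textbf{Main obstacle.} I expect the hard parts to be steps 3 and 4. In these comparisons the degree sequences coincide up to endpoint effects, so $w_0,w_1,w_2$ agree, or differ only through the number of endpoints. One then needs exact control of $\Phi_{P_k}(\mu)$. I would first try the closed form via Chebyshev polynomials: with $\mu=2\cosh\theta$, $\Phi_{P_k}(\mu)$ is explicit. From that I would prove convexity in $k$ of $\Phi_{P_k}(\mu)-k/(\mu-2)$, which is exactly what balancing needs. The condition $s<\frac{n-2}{2}$ forces every component to have at least two vertices, so no $P_1$ appears. This keeps the convexity argument uniform, and I expect the hypothesis to enter precisely there.
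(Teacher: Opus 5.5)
There is a genuine gap in Stage 2, Step 2, where you rule out vertices of degree $\ge 3$ in $H$. Your own estimates do not give domination. The $w_2$ gain is $(w_2(H)-w_2(L))\mu^{-3}$. Since $\sum_v d(v)^2\equiv\sum_v d(v)\pmod 2$, this gap is even and can be exactly $2$: for example, $K_{1,3}\cup P_a$ versus $P_4\cup P_a$ gives $12-10$. So the gain can be as small as about $2n^{-3}$.

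Your tail bound $O(n^{-4}w_3)$ with $w_3=O(n)$ is also $O(n^{-3})$, so nothing is dominated unless you track constants, and the constants are unfavourable. Your sketch offers no comparison of $w_k(H)$ with $w_k(L)$ for $k\ge 3$. Without one, the crude control on the negative part of the tail is $\sum_{k\ge 3}w_k(L)\mu^{-k-1}\approx w_3(L)\mu^{-4}$. For a linear forest with long paths, $w_3(L)\approx 8e\approx 8n$, giving roughly $8n^{-3}$, four times the gain. In addition, $w_3=O(n)$ is false when $H$ has a high-degree vertex: a star with $r$ edges has $w_3=2r^2$.

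Closing this step needs a genuine term-by-term comparison of $w_3,w_4,\dots$ against a well-chosen $L$, or a local edge-moving argument with Perron-vector estimates. That is exactly the Lin--Zhou machinery your plan is meant to avoid.

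The paper does not prove this step. It imports Lin--Zhou's structural theorem (Theorem~\ref{thm:structure}): $|V(G)|=n$, and every component of $\overline G$ is a cycle or a non-trivial path, with exactly $s+1$ paths. This is also where the hypothesis $s<\frac{n-2}{2}$ is actually used. If you cite that result in place of your Step 2, the rest of your plan is essentially the paper's argument:
\begin{itemize}
\item the secular equation $\mathbf 1^{\mathsf T}((\rho+1)I-A(H))^{-1}\mathbf 1=1$ and the comparison principle;
\item the closed form of $\Phi_{P_k}$;
\item strict convexity in $k$ for balancing;
\item for cycles, the bound $0<\Phi_{P_{k+1}}(\lambda)-\Phi_{P_k}(\lambda)<\frac{1}{\lambda-2}$ against $\Phi_{C_\ell}(\lambda)=\frac{\ell}{\lambda-2}$. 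This says $\Phi_{P_k}-\frac{k}{\lambda-2}$ is strictly \emph{decreasing}, which your Step 3 needs in addition to convexity.
\end{itemize}

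Two smaller remarks. First, your Stage 1 order argument is correct and self-contained. The bound $\rho\ge 2(N-1)-2m/N$ holds for any diameter, increases in $N$, and at $N=n+1$ equals $n+4-\frac{6+2s}{n+1}>n+2$, while $\rho(\overline{P_{n,s+1}})<n+1$. Once $N=n$, diameter $\le 2$ is automatic by Lemma~\ref{lem-diam}, so your edge-switching argument for the diameter is unnecessary. Second, $s<\frac{n-2}{2}$ does not forbid $P_1$ components in a general linear forest: $P_{n-s}\cup sP_1$ has $n-1-s$ edges. This is harmless, because the closed form for $\Phi_{P_k}$ and the increment formula remain valid at $k=1$, so balancing still works.
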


\noindent
{\bf Remark.} In the case $s=0$, the above result is invalid. In this setting, 
we have $m={n-1 \choose 2}$, the minimizer is uniquely the complete graph $K_{n-1}$, instead of the expected graph $\overline{P_{n,1}}=\overline{P_n}$.  

\smallskip
The range $\max\{\frac{n-6}{2},1\}\le  s \le n-1$ was already investigated by Lin and Zhou \cite{LZ2026}, which together with the range in Theorem \ref{main} yields the following {\it complete solution} of the distance spectral extremal problem for graphs with given size $m$. 

\begin{thm}\label{t-1}
For every integer $m\ge 3$, let $n=\lceil \frac{1}{2}(1+\sqrt{8m+1})\rceil$ and $s=m-\binom{n-1}{2}$. 
Then for all $1\le s \le n-1$, the graph $\overline{P_{n,s+1}}$ uniquely minimizes $\rho$ over all graphs in $\mathcal{G}(m)$. 
\end{thm}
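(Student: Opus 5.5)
The statement is Theorem \ref{t-1}. Given Theorem \ref{main}, it follows by splicing two ranges together, so the plan is a case split on $s$.

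Fix $m\ge 3$ and define $n,s$ as in (\ref{eq-n-s}). Then $\binom{n-1}{2}<m\le\binom n2$ and $1\le s\le n-1$.

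\textbf{Case 1: $1\le s<\frac{n-2}{2}$.} Let $G$ be a minimizer of $\rho$ over $\mathcal G(m)$. Such a minimizer exists because $\mathcal G(m)$ is a finite nonempty family up to isomorphism, so $\rho$ attains its minimum there. Theorem \ref{main} then gives $G\cong\overline{P_{n,s+1}}$. Every minimizer is isomorphic to this one graph, so $\overline{P_{n,s+1}}$ is the unique minimizer.

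\textbf{Case 2: $\max\{\frac{n-6}{2},1\}\le s\le n-1$.} This is exactly the range of Lin and Zhou \cite[Theorem 1.3]{LZ2026}, which asserts the same unique minimizer.

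\textbf{The two ranges cover everything.} Let $s$ be an integer with $1\le s\le n-1$. If $s<\frac{n-2}{2}$, it lies in Case 1. Otherwise $s\ge\frac{n-2}{2}\ge\frac{n-6}{2}$, and together with $s\ge1$ this gives $s\ge\max\{\frac{n-6}{2},1\}$, so it lies in Case 2. The ranges even overlap, for $\frac{n-6}{2}\le s<\frac{n-2}{2}$, and there the two results agree, so nothing is inconsistent.

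\textbf{Small cases.} For small $m$ the Case 1 range may be empty, since it requires $n\ge5$; then Case 2 covers all $s$ on its own. Note that $s=0$ never occurs, because $n$ is defined by a ceiling that forces $m>\binom{n-1}{2}$. So the Remark's exceptional case $s=0$ does not arise.

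\textbf{Main obstacle.} The only substantive input is Theorem \ref{main} itself, which is assumed here. The deduction above is just bookkeeping of the ranges.
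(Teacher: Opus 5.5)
Your proposal is correct and follows the paper's own argument: Theorem~\ref{t-1} is obtained by splicing Theorem~\ref{main} (for $1\le s<\frac{n-2}{2}$) with Lin--Zhou's result \cite{LZ2026} (for $\max\{\frac{n-6}{2},1\}\le s\le n-1$). Your range bookkeeping is sound, including the existence of a minimizer, the upgrade from ``every minimizer is $\cong\overline{P_{n,s+1}}$'' to unique minimization, and the observation that the ceiling definition forces $s\ge 1$. The paper also sketches, in Section~\ref{sec-short-proof}, a self-contained walk-counting argument for $s\ge\frac{n-2}{2}$ that could replace your Case~2 citation of Lin--Zhou.
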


As an application of our framework,  we shall provide {\it a new self-contained proof} for  $s\ge \frac{n-2}{2}$ in Section \ref{sec-short-proof}, thereby giving a unified treatment of Theorem \ref{t-1} for the entire range $1\le s\le n-1$.

\smallskip
Before the proof, we first illustrate the difficulty for the small $s$. 
To appreciate why the range of small $s$ was much difficult and left open, we now  recall how the proof argument of \cite[Theorem 1.3]{LZ2026} worked for large $s$. 
A structural lemma (we state it as in Theorem~\ref{thm:structure} below) shows that any minimizer $G$ has complement 
$
\overline G= C_{\ell_1}\cup\cdots\cup C_{\ell_t}\cup P_{k_1}\cup\cdots\cup P_{k_{s+1}}
$, where $\ell_i\ge 3$, $k_j\ge 2$ 
and the number of path components equals $s+1$. The problem is therefore to show that the extremal graph is $t=0$ together with the balanced path partition $P_{n,s+1}$. 
Lin and Zhou's argument proceeds by local edge-switching: given two candidate configurations, one exchanges a single edge and uses a Rayleigh quotient to show $\rho$ strictly decreases. For large $s$, this works well because the number of candidate configurations is small and can be enumerated. In fact, when $s=\lceil \frac{1}{2}(n-5)\rceil$, the only cyclic candidate is $C_3\cup\tfrac{n-3}{2}K_2$, and a single edge-switch to $P_{n,(n-3)/2}$ suffices.

\smallskip
However, when $s$ is small, the number of possible cycle structures grows without bound, and as one can verify numerically, no single step edge-switch is monotone. There exist cyclic configurations $G$ such that every local rearrangement, which breaks one cycle and reconnects one path, either fails to be admissible or increases $\rho$. The following  example with $n=11$ and $s=1$ illustrates this. 
$$
\rho(\overline{C_3\cup P_4\cup P_4})\approx 11.65444,\qquad \rho(\overline{P_3\cup P_8})\approx 11.65452.
$$ 
The natural operation, which breaks the $C_3$ into $P_3$ and merges the two $P_4$'s into $P_8$, increases the value of $\rho$, even though the minimum $\rho(\overline{P_{11,2}})=\rho(\overline{P_5\cup P_6})\approx 11.65442$ is strictly smaller than both of the above two graphs. 
In other words, the minimum can {\it not} be reached by a monotone sequence of single-step improvements of the edge-switch operation. To overcome the difficulty, a global argument to compare the distance spectral radii of graphs is required.

\paragraph{Our approach.} 
We get rid of the use of local edge-switching operations as it fails to be monotone for small integer $s$. Instead, we develop a new global comparison framework grounded from matrix analysis. 
Our approach can be partitioned into three stages. 

The first stage lies in linking the distance matrix $D(G)$ to the adjacency matrix $A(H_0)$ of the complement $H_0=\overline{G}$. We show an identity $D(G) = J - I + A(H_0)$, and that $\rho (G)$ is the unique root of the equation $\mathbf 1^{\mathsf{T}} ((\rho+1)I - A(H_0) )^{-1}\mathbf 1 =1$; see Lemma \ref{lem-2.2}. 
For a graph $K$, we denote 
$\Phi_K(\lambda) :=  \bm 1^{\mathsf T} (\lambda I - A(K))^{-1} \bm 1$. 
Let $\mathcal{C}(H_0)$ be the set of components of $H_0$. We define the $\Psi$-function of $G$ as $\Psi_G(\lambda)  := \sum_{K \in \mathcal{C}(H_0)} \Phi_K(\lambda)$; see Definition \ref{defn}.  
Under the above notation, we reduce the problem of comparing distance spectral radii to evaluating these $\Psi$-functions; see Theorem \ref{thm-comparison}.  

The second stage is to compute the explicit formula of $\Phi_K(\lambda)$ when $K$ is chosen as a path and cycle; see Lemma \ref{lem-Phi-cycle-path}. Based on these formulas, we then perform a rigorous increment analysis of $\Phi_K(\lambda)$ when $K$ is a path. We provide an upper bound on the increment and establish a strict convexity property, showing that the map $k\mapsto \Phi_{P_k}(\lambda)$ is strictly convex on $k$; see Theorem \ref{thm-increment}. 

The third stage is an argument for balancing path lengths. 
The properties from the second stage allow us to formulate balancing principles, demonstrating 
that the complement $H_0$ of the extremal graph must consist exclusively of paths of nearly equal lengths; see Lemma \ref{lem-5.1} and Claim \ref{claim:1}. Moreover, we show that $H_0$ must be devoid of any cycles; see Lemma \ref{lem-5.2} and Claim \ref{claim:2}.    

Finally, we present a short self-contained proof in the  range $s\ge \frac{n-2}{2}$. 
By expanding the $\Psi$-function via the Neumann series, we see that 
$\Psi_G(\lambda) = \sum_{k=0}^{\infty} \frac{w_k(H_0)}{\lambda^{k+1}}$, 
where $w_k(H_0)$ denotes the total number of walks of length $k$ in $H_0$, we integrate the comparison principle in Theorem \ref{thm-comparison} with walk enumeration to provide {\it a new unified proof} covering the entire range $1 \le s \le n-1$. 

We anticipate that the method developed in this paper will offer an efficient approach to solving additional extremal problems related to graphs and their distance spectral radii. 

\section{Proof of the main result}

In order to prove Theorem \ref{main}, 
we need to present some preliminary results.
In the range $1\le s< \frac{n-2}{2}$,  
the expected extremal graphs in $\mathcal G(m)$ are restricted by the following result.

\begin{thm}[Lin--Zhou \cite{LZ2026}]\label{thm:structure}
Let $G\in\mathcal G(m)$ be a graph that minimizes $\rho$ of graphs in $\mathcal G(m)$. 
Then $|V(G)|=n$. In addition, if $1\le s < \frac{n-2}{2}$, then each component of $\overline G$ is either a cycle or a non-trivial path, and there are exactly $s+1$ components being paths in $\overline G$.
\end{thm}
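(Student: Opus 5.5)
The plan has three steps: first pin down the order, then the diameter, then the structure of the complement $H_0=\overline G$. Throughout, I compare the minimizer $G$ with the explicit competitor $\overline{P_{n,s+1}}$, whose distance matrix is $J-I+A(P_{n,s+1})$. Since every component of $P_{n,s+1}$ is a path, $\lambda(P_{n,s+1})<2$, so
$$\rho\bigl(\overline{P_{n,s+1}}\bigr)\le n-1+\lambda(P_{n,s+1})<n+1 .$$

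\emph{Order.} Suppose $G\in\mathcal G(m)$ has $N$ vertices. Since $m>\binom{n-1}{2}$, we have $N\ge n$. The Rayleigh quotient with $\mathbf 1$ gives
$$\rho(G)\ge \frac{\mathbf 1^{\mathsf T}D(G)\mathbf 1}{N}\ge \frac{2\bigl(2\binom N2-m\bigr)}{N}=2(N-1)-\frac{2m}{N},$$
using $d(u,v)\ge 2$ for non-adjacent pairs. This lower bound is increasing in $N$. Taking $N=n+1$ and $m\le\binom n2$ gives
$$\rho(G)\ge n+2-\frac{2}{n+1}>n+1>\rho\bigl(\overline{P_{n,s+1}}\bigr).$$
So a minimizer has exactly $n$ vertices.

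\emph{Diameter.} On $n$ vertices, $H_0$ has exactly $\binom n2-m=n-1-s$ edges. If $G$ had diameter at least $3$, I would use the same Rayleigh quotient with a sharper count. Each pair at distance $\ge3$ adds at least $1$ to $\mathbf 1^{\mathsf T}D\mathbf 1$. Since $e(H_0)<n$, the quantity $\mathbf 1^{\mathsf T}D\mathbf 1/n$ is close to $n-1+2e(H_0)/n$, which is roughly what the competitor attains. I expect this step to need care: rather than the crude bound, I would compare $\rho(G)$ with $\rho$ of the graph whose distance matrix is $J-I+A(H_0)$. Entrywise domination $D(G)\ge J-I+A(H_0)$, with strict inequality somewhere, gives strict inequality of spectral radii. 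This reduces the problem to minimizing $\rho$ over diameter-$2$ graphs with the same complement size, and in that class I still need to show the complement can be taken to have maximum degree $\le 2$. Once this is done, $D(G)=J-I+A(H_0)$.

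\emph{Structure of $H_0$.} With $D(G)=J-I+A(H_0)$, the Perron vector $x$ of $D(G)$ satisfies
$$x_i=\frac{S+(A(H_0)x)_i}{\rho+1},\qquad S=\sum_j x_j .$$
So $x_i$ is increasing in the $x$-weight of the $H_0$-neighbourhood of $i$. Suppose $H_0$ has a vertex $v$ with $d_{H_0}(v)\ge3$. Because $e(H_0)=n-1-s<n$, there is a vertex $w$ with $d_{H_0}(w)\le1$. I would move an $H_0$-edge $vu$, with $u\neq w$ and $u\notin N_{H_0}(w)$, to the edge $wu$. This is a Kelmans-type shift, and a Rayleigh-quotient computation with $x$ should show $\rho$ strictly decreases provided $x_v\ge x_w$. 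That inequality follows from the fixed-point formula together with a short degree and neighbourhood comparison. The shift keeps $H_0$ a graph with the same number of edges, and keeps $G$ connected of diameter $2$ (for $n$ large). Iterating gives $\Delta(H_0)\le2$, so every component is a cycle, a path, or an isolated vertex.

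\emph{Counting path components.} A graph on $n$ vertices with $n-1-s$ edges whose components are paths and cycles has exactly $n-(n-1-s)=s+1$ path components, counting isolated vertices as $P_1$. It remains to exclude $P_1$. If $H_0$ has an isolated vertex $w$, then because $s<\frac{n-2}{2}$ some path or cycle component is long. I would move an end-edge of the longest path (or an edge of a cycle) so that it attaches to $w$, keeping the number of path components fixed. The fixed-point formula shows $x$ is smallest at isolated vertices, which should make this move decrease $\rho$.

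I expect the main obstacle to be this last step together with the Kelmans shift. Proving the strict inequality $x_v>x_w$ requires controlling second neighbourhoods, and the hypothesis $s<\frac{n-2}{2}$ is exactly what must guarantee that a suitable long component exists.
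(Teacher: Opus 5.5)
The paper does not prove this statement: it is quoted from Lin--Zhou \cite{LZ2026} and used as a black box, so your argument has to be judged on its own.

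\textbf{What works.} Your order step is correct and self-contained. The bound $2(N-1)-2m/N$ increases with $N$, and at $N=n+1$ it is at least $n+2-\frac{2}{n+1}>n+1$, while $\rho(\overline{P_{n,s+1}})\le n-1+\lambda(P_{n,s+1})<n+1$.

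Your diameter step is unfinished, but it is also unnecessary. On $n$ vertices with $m>\binom{n-1}{2}$ edges, two vertices at distance at least $3$ would be non-adjacent with disjoint neighbourhoods, which forces at least $n-1$ non-edges (this is Lemma~\ref{lem-diam}). So $D(G)=J-I+A(H_0)$ holds automatically. The same identity holds for every graph your moves produce, for all $n$, not only large $n$.

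\textbf{The gap: the Kelmans shift.} You are \emph{minimizing} $\rho$, and the Rayleigh quotient with the Perron vector $x$ of $G$ points the wrong way. For $H_0'=H_0-vu+wu$ and unit $x$ you get $x^{\mathsf T}D(G')x=\rho(G)-2x_u(x_v-x_w)$. But the Rayleigh principle only gives $\rho(G')\ge x^{\mathsf T}D(G')x$, which is a lower bound on $\rho(G')$. So $x_v\ge x_w$ proves nothing. (It is trivial anyway: every entry satisfies $S/(\rho+1)\le x_j<2S/(\rho+1)$, hence $(\rho+1)x_v\ge S+3S/(\rho+1)>(\rho+1)x_w$.)

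To show a decrease you must use the Perron vector $y$ of $G'$, with $S'=\sum_j y_j$. Then $\rho(G)\ge y^{\mathsf T}D(G)y=\rho(G')+2y_u(y_v-y_w)$, so you need $y_v\ge y_w$ in $H_0'$, where now $d_{H_0'}(v)\ge 2$ and $d_{H_0'}(w)\le 2$.
\begin{itemize}
\item If $w$ was isolated in $H_0$, this still follows from the fixed-point formula.
\item If $H_0$ has no isolated vertex, you are forced to take $d_{H_0}(w)=1$. For $d_{H_0}(v)=3$, both vertices then have degree $2$ in $H_0'$, and you need $\sum_{j\in N_{H_0'}(v)}y_j\ge y_u+y_{w'}$, where $w'$ is the old neighbour of $w$.
\end{itemize}
Crude degree bounds cannot decide the second case, and the inequality is false, for example, when $v$ and $w'$ are centres of two different $K_{1,3}$-components of $H_0$. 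With $a,b$ the leaves left at $v$, one gets $(\rho'+1)(y_u+y_{w'})\ge 2S'+4S'/(\rho'+1)>2S'+2y_v=(\rho'+1)(y_a+y_b)$ once $\rho'+1>1+\sqrt5$.

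So $\Delta(H_0)\le 2$, which is the heart of the theorem, is not established. You need either a more carefully chosen move with a proof that a good choice always exists, or a global comparison. In the example above, shifting inside $v$'s own star turns $K_{1,3}$ into $P_4$, where symmetry gives $y_v=y_w$. For a global route, note that the target structure is exactly the set of $H_0$ with $n-1-s$ edges minimizing $w_2(H_0)=\sum_v d_{H_0}(v)^2$, since all degrees lie in $\{1,2\}$ when $n<2(n-1-s)<2n$. The expansion (\ref{eq-expand}) therefore suggests a global argument, at the cost of controlling the higher $w_k$.

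\textbf{Eliminating isolated vertices.} This step has the same direction error, but it is repairable with the new Perron vector.
\begin{itemize}
\item For a path $a_1\cdots a_k$ and isolated $w$, replacing $a_{k-1}a_k$ by $a_kw$ needs $y_{a_{k-1}}\ge y_w$. This reduces to $y_{a_{k-2}}\ge y_{a_k}=S'/\rho(G')$, which holds, with equality only for $k=3$.
\item For a cycle, replacing $b_1b_\ell$ by $b_\ell w$ gives a path with ends $w,b_1$, so $y_{b_1}=y_w$ by symmetry.
\end{itemize}
In the equality cases, strictness follows because $(D(G)y)_w=\rho(G')y_w-y_{a_k}$ (resp.\ $-y_{b_\ell}$), so $y$ is not an eigenvector of $D(G)$.
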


Throughout the rest of the paper, we denote by 
$$
H_0 =\overline G= C_{\ell_1}\cup \cdots\cup C_{\ell_t}\;\cup\; P_{k_1}\cup \cdots\cup P_{k_{s+1}},
$$
where $t\ge 0,\  \ell_i\ge 3,\ k_j\ge 2$ satisfying  
$N_c:=\sum_{i=1}^t \ell_i $ and $N_p:=\sum_{j=1}^{s+1} k_j =n-N_c.$
Based on this result, our goal is to prove that $t=0$ (no cycles) and the path lengths $k_1, k_2,\ldots ,k_{s+1}$ are as equal as possible. We point out that this is not as easy as it looks. To achieve this goal, we need to establish a series of important lemmas and theorems. 
We will now present them one by one.

\smallskip 
The following lemma from Matrix Analysis can be found in \cite[page 7]{Bha1997}. 

\begin{lemma}[Neumann series, e.g., \cite{Bha1997}] \label{lem-Neumann}
    Let \(A \in \mathbb{C}^{n \times n}\) be a square matrix.
    If \(\|A\| < 1\) for some matrix norm (or equivalently,
    the spectral radius \(\lambda (A) < 1\)), then \(I - A\) is invertible and
    \[
    (I - A)^{-1} = \sum_{i=0}^{\infty} A^i = I + A + A^2 + A^3 + \cdots
    \]
    where the series converges in any matrix norm.
\end{lemma}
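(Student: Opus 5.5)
The Neumann series is a standard fact, and I would prove it by a telescoping argument. Fix a submultiplicative matrix norm $\|\cdot\|$ with $\|A\|<1$. I will first show that the partial sums $S_N=\sum_{i=0}^{N}A^i$ form a Cauchy sequence. For $M>N$,
\[
\|S_M-S_N\|\le \sum_{i=N+1}^{M}\|A\|^i\le \frac{\|A\|^{N+1}}{1-\|A\|}\to 0 .
\]
Since $\mathbb{C}^{n\times n}$ is finite dimensional, it is complete, so $S_N\to S$ for some matrix $S$. All norms on this space are equivalent, so the convergence holds in every matrix norm.

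Next comes the telescoping identity
\[
(I-A)S_N=S_N(I-A)=I-A^{N+1}.
\]
Because $\|A^{N+1}\|\le\|A\|^{N+1}\to0$, we have $A^{N+1}\to 0$. Letting $N\to\infty$ and using continuity of matrix multiplication gives $(I-A)S=S(I-A)=I$. Hence $I-A$ is invertible, and $(I-A)^{-1}=S=\sum_{i\ge0}A^i$.

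The only step needing real care is the parenthetical equivalence with the spectral radius condition $\lambda(A)<1$. One direction is immediate: every eigenvalue $\mu$ with eigenvector $x$ satisfies $|\mu|\,\|x\|\le\|A\|\,\|x\|$, so $\lambda(A)\le\|A\|$ for any induced norm. For the converse, assume $\lambda(A)<1$. I would use Gelfand's formula $\lambda(A)=\lim_k\|A^k\|^{1/k}$, or alternatively build an explicit norm as follows. Take the Jordan form $A=PJP^{-1}$ and rescale by $D_\varepsilon=\mathrm{diag}(1,\varepsilon,\dots,\varepsilon^{n-1})$. This makes the off-diagonal entries equal to $\varepsilon$, so the induced $\infty$-norm $\|X\|_*:=\|D_\varepsilon^{-1}P^{-1}XPD_\varepsilon\|_\infty$ satisfies $\|A\|_*\le\lambda(A)+\varepsilon<1$ for small $\varepsilon$. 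This reduces the spectral radius version to the norm version proved above.

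In our application the matrix will be $A(H_0)/\lambda$ with $\lambda>\lambda(H_0)$. It is symmetric, so its spectral norm equals its spectral radius and the subtle direction is not even needed there.
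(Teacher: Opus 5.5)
The paper gives no proof of this lemma; it simply cites Bhatia's \emph{Matrix Analysis}. Your argument is the standard textbook proof and it is correct: the partial sums are Cauchy, the telescoping identity $(I-A)S_N=I-A^{N+1}$ passes to the limit, and the Jordan-form rescaling by $D_\varepsilon$ produces an induced norm with $\|A\|_*<1$ when only $\lambda(A)<1$ is assumed.

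Two minor points. First, you are right to require the norm to be submultiplicative, since the statement fails otherwise. For example, for $n\ge 2$ the max-entry norm of $0.9J$ is $0.9$, while its spectral radius is $0.9n>1$. Second, you prove $\|A\|<1\Rightarrow\lambda(A)<1$ only for induced norms. For an arbitrary submultiplicative norm this already follows from your convergence step: $\|A^k\|\le\|A\|^k$ gives $A^k\to 0$, which forces $\mu^k x=A^k x\to 0$ for every eigenpair $(\mu,x)$, hence $|\mu|<1$.

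Your closing remark is also apt. Since $A(H_0)/\lambda$ is symmetric, its spectral norm equals its spectral radius, and that is all the paper's applications require.
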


\subsection{New comparison principle via matrix analysis}

The next lemma is the cornerstone of our approach. 
We establish an interesting relation between the distance spectral radius $\rho (G)$ and the adjacency matrix of $H_0$. 
The diameter of a connected graph $G$ is defined to be the maximum distance between any two vertices in $G$. 

\begin{lemma}\label{lem-2.2}
Let $H_0$ be a disjoint union of cycles and paths, with at least two components, and let $G=\overline{H_0}$ on $n\ge 4$ vertices. Let $A(H_0)$ be the adjacency matrix of $H_0$. 
Then
\begin{enumerate}[(i)]
\item $G$ is connected with diameter exactly $2$, 
and $D(G)=J-I+A(H_0)$.
\item $\rho:=\rho(G)$ is the unique root in $(2,\infty)$ of the equation 
\begin{equation}\label{e-1}
\mathbf 1^{\mathsf{T}}\bigl((\rho+1)I - A(H_0)\bigr)^{-1}\mathbf 1 =1. 
\end{equation}
\item Every entry of $\bigl(\lambda I - A(H_0)\bigr)^{-1}$ 
is positive and strictly decreasing in $\lambda$ on $(2,\infty)$. In particular, the left-hand side of (\ref{e-1}) is strictly decreasing in $\rho$.
\end{enumerate}
\end{lemma}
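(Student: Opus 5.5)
We prove the three parts in order; the main tool is that $A:=A(H_0)$ has spectral radius at most $2$, because every component of $H_0$ is a path or a cycle.

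\emph{Part (i).} Take distinct $u,v$. If $uv\notin E(H_0)$, then $uv\in E(G)$ and $d_G(u,v)=1$. If $uv\in E(H_0)$, we need a common neighbour of $u$ and $v$ in $G$, that is, a vertex $w\ne u,v$ adjacent to neither in $H_0$. Since $H_0$ has at least two components, choose $w$ in a component other than the one containing $u$ and $v$. Then $d_G(u,v)=2$. So $G$ is connected with diameter at most $2$. Because $H_0$ has an edge (every component is a non-trivial path or a cycle), the diameter is exactly $2$. This yields $D(G)=A(G)+2A(H_0)$. Substituting $A(G)=J-I-A(H_0)$ gives $D(G)=J-I+A(H_0)$.

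\emph{Part (iii).} Each component of $H_0$ has maximum degree at most $2$, so $\lambda(A)\le 2$. For $\lambda>2$ we have $\|A/\lambda\|<1$ in the spectral norm, and Lemma~\ref{lem-Neumann} gives
\[
(\lambda I-A)^{-1}=\sum_{k\ge0}\lambda^{-k-1}A^k .
\]
All terms are entrywise nonnegative and the $k=0$ term contributes $\lambda^{-1}>0$ on the diagonal. Positivity of every entry is not automatic, however: entries between different components of $H_0$ are zero. So part (iii) must be read for entries within a component, or as nonnegativity in general. Within a connected component, some $A^k$ has a positive $(u,v)$ entry, so that entry of the inverse is positive. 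Each nonzero entry is a power series in $1/\lambda$ with nonnegative coefficients, not all zero, and is therefore strictly decreasing in $\lambda$. Hence $\mathbf 1^{\mathsf T}(\lambda I-A)^{-1}\mathbf 1$ is strictly decreasing on $(2,\infty)$, since its diagonal contributions alone are strictly decreasing. I expect this careful statement of positivity to be the main subtlety.

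\emph{Part (ii).} Let $x>0$ be the Perron vector of $D(G)$ and put $\rho=\rho(G)$. By part (i), $D(G)x=\rho x$ becomes $(\mathbf 1^{\mathsf T}x)\mathbf 1=((\rho+1)I-A)x$. First show $\rho>2$. Since $n\ge4$, $\rho\ge \mathbf 1^{\mathsf T}D\mathbf 1/n\ge n-1\ge 3>2$. Then $\rho+1>3>\lambda(A)$, so $(\rho+1)I-A$ is invertible and
\[
x=(\mathbf 1^{\mathsf T}x)\,((\rho+1)I-A)^{-1}\mathbf 1 .
\]
Multiplying on the left by $\mathbf 1^{\mathsf T}$ and dividing by $\mathbf 1^{\mathsf T}x>0$ gives equation~(\ref{e-1}).

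For uniqueness, set $f(\rho)=\mathbf 1^{\mathsf T}((\rho+1)I-A)^{-1}\mathbf 1$. By part (iii), $f$ is strictly decreasing on $(2,\infty)$, so it takes the value $1$ at most once there. Existence is already given by $\rho(G)$; alternatively, note $f\to0$ as $\rho\to\infty$.
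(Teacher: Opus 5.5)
Your proof is correct and follows the paper's route. You read off distances $1$ and $2$ from the complement, and you rewrite the Perron eigen-equation as $\bigl((\rho+1)I-A(H_0)\bigr)x=(\mathbf 1^{\mathsf T}x)\mathbf 1$, which is invertible because $\lambda(H_0)\le 2<\rho+1$. You then get monotonicity from the Neumann series. The remaining differences are minor: you bound $\rho\ge n-1$ by the Rayleigh quotient at $\mathbf 1$ rather than $\rho\ge n$ by the minimum row sum, and you exhibit the common neighbour in another component explicitly.

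Your caveat in (iii) is also right. Since $H_0$ has at least two components, $\bigl(\lambda I-A(H_0)\bigr)^{-1}$ is block-diagonal with zero off-block entries. So ``positive and strictly decreasing'' holds only for entries within a component; the paper's own proof quietly says ``nonnegative''. This does not affect the strict monotonicity of $\mathbf 1^{\mathsf T}\bigl(\lambda I-A(H_0)\bigr)^{-1}\mathbf 1$.
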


\begin{proof}[{\bf Proof of (i)}] 
By definition, it is easy to see that $G$ is a connected graph with diameter $2$. 
For any two distinct $u,v\in V(G)$, we see that  $d_{G}(u,v)=1$ if and only if $uv\notin E(H_0)$;  and $d_{G}(u,v)=2$ if and only if $uv\in E(H_0)$. Thus, it follows that 
$$
d_G(u,v)=1+\mathbf 1_{\{uv\in E(H_0)\}}\;=\;(J-I)_{uv}+(A(H_0))_{uv}. 
$$
For $u=v$, both sides are $0$. Hence, we obtain  $D(G)=J-I+A(H_0)$, as needed. 

\vspace{2mm}
\noindent{\bf Proof of (ii).} Since $D(G)$ is irreducible and nonnegative, the Perron--Frobenius theorem yields a strictly positive eigenvector $\bm x>0$ associated with $\rho$. Normalize so that $\mathbf 1^{\mathsf{T}}\bm x=1$. From part (i), 
$$
\rho\, \bm x=D(G)\bm x= \big(J-I+A(H_0) \big)\bm x=\mathbf 1(\mathbf 1^{\mathsf{T}}\bm x)-\bm x+A(H_0)\bm x=\mathbf 1-\bm x+A(H_0)\bm x.
$$
Rearranging the above gives 
\begin{equation}\label{e-2}
\bigl((\rho+1)I - A(H_0)\bigr)\bm x = \mathbf 1. 
\end{equation}

We next show that the matrix $(\rho+1)I-A(H_0)$ is invertible. Since every component $K$ of $H_0$ is a cycle or a path, the spectral radius  $\lambda(H_0)\le 2$. 
Since each vertex has $\deg_{H_0}(u)\ge 1$, we have $\deg_G(u)\le n-2$. 
On the other hand, 
since $G$ has diameter $2$, every row sum of $D(G)$ equals to $2(n-1)-\deg_G(u) \ge n$. 
By the Perron–Frobenius theorem, we have $\rho(G)\ge  n\ge 4$. Therefore $\rho+1\ge n+1>2\ge \lambda(H_0)$, and $(\rho+1)I-A(H_0)$ is invertible. Solving (\ref{e-2}) yields 
$$
\bm{x} = \big((\rho+1)I-A(H_0) \big)^{-1}\mathbf 1.
$$ 
multiplying $\mathbf 1^{\mathsf{T}}$ of both sides and using the assumption $\mathbf 1^{\mathsf{T}} \bm{x}=1$ gives the desired identity (\ref{e-1}).

\vspace{2mm}
\noindent{\bf Proof of (iii).} For $\lambda>2\geq \lambda(H_0)$, the Neumann series in Lemma \ref{lem-Neumann} gives  
\begin{equation*} 
\big(\lambda I- A(H_0) \big)^{-1} 
=\sum_{i=0}^{\infty}\frac{A(H_0)^i}{\lambda^{i+1}}, 
\end{equation*} 
where the series converges entrywise. Since $A(H_0)$ is nonnegative, 
each entry of the partial sums is a nonnegative, strictly decreasing function of $\lambda$, and the same holds in the limit. In particular, the left-hand side of (\ref{e-1}) equals $\sum_{i= 0}^{\infty} \lambda^{-(i+1)}\mathbf 1^{\mathsf{T}}A(H_0)^i \mathbf 1$, whose $i=0$ term $\frac{n}{\lambda}$ is already strictly decreasing, so the whole sum is strictly decreasing in $\lambda$ on $(2,\infty)$. This proves (iii), and together with the identity (\ref{e-1}) at $\lambda=\rho+1$, also 
show that (\ref{e-1}) has a unique root in $(2,\infty)$.
\end{proof}

 For each component $K$ of $H_0$ and $\lambda>2$, we define 
\begin{equation*}
\Phi_K(\lambda):=\mathbf 1^{\mathsf{T}}\bigl(\lambda I - A(K)\bigr)^{-1}\mathbf 1. 
\end{equation*} 

Next, we extend the notation to the graph $H_0$.  
Since $A(H_0)$ is block-diagonal with each block corresponding to a component of $H_0$,  its inverse matrix $(\lambda I-A(H_0))^{-1}$ is also  block-diagonal. 

\begin{defn} \label{defn}
Let $H_0=\overline{G}$ be a disjoint union of cycles and paths, with at least two components.  
Denote $\mathcal{C}(H_0)$ the set of components of $H_0 $. We define {\it the $\Psi$-function} of $G$ as follows:   
\begin{equation*}
\Psi_{G}(\lambda):= \Phi_{H_0}(\lambda) = \sum_{K\in \mathcal{C}(H_0)} \Phi_K(\lambda). 
\end{equation*}
\end{defn}

Thus, Lemma \ref{lem-2.2} (ii) gives $\Psi_{G}(\rho(G) +1)=1$. 
Lemma~\ref{lem-2.2} (iii) implies that $\Phi_{H_0}(\lambda)$ is strictly decreasing in $\lambda \in (2,\infty)$. 
By Definition \ref{defn}, we know that $\Psi_{G}(\lambda)$ 
is also strictly decreasing in $\lambda$. 
Comparing the distance spectral radius of two graphs reduces to comparing their $\Psi$-functions. 
The following theorem can be viewed as a key innovation in this paper to solve Conjecture \ref{conj-LZ}. 

\begin{thm}\label{thm-comparison}
Let $G_1,G_2$ be two graphs satisfying the hypotheses of Lemma~\ref{lem-2.2}, with corresponding $\Psi$-functions $\Psi_{1},\Psi_{2}$, respectively. If $\Psi_1(\lambda)<\Psi_2(\lambda)$ for every $\lambda>2$, then $\rho(G_1)<\rho(G_2)$. 
More generally, if the strict inequality holds merely at 
$\lambda=\rho(G_2)+1$, then $\rho(G_1)<\rho(G_2)$.
\end{thm}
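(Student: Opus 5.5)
The plan is to use monotonicity of $\Psi_1$ together with the characterization $\Psi_i(\rho(G_i)+1)=1$ from Lemma~\ref{lem-2.2}(ii). It suffices to prove the second, more general assertion, since the first hypothesis ($\Psi_1<\Psi_2$ on all of $(2,\infty)$) implies strict inequality at the single point $\lambda_2:=\rho(G_2)+1$. The proof of Lemma~\ref{lem-2.2}(ii) gives $\rho(G_2)\ge 4$, so $\lambda_2>2$ lies in the domain where both $\Psi$-functions are defined.

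First I evaluate at $\lambda_2$. By Lemma~\ref{lem-2.2}(ii) applied to $G_2$, we have $\Psi_2(\lambda_2)=1$, so the hypothesis gives $\Psi_1(\lambda_2)<1$. Lemma~\ref{lem-2.2}(ii) applied to $G_1$ gives $\Psi_1(\lambda_1)=1$, where $\lambda_1:=\rho(G_1)+1>2$. Hence $\Psi_1(\lambda_2)<\Psi_1(\lambda_1)$.

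Next I use monotonicity. By Lemma~\ref{lem-2.2}(iii), equivalently by Definition~\ref{defn}, $\Psi_1$ is strictly decreasing on $(2,\infty)$. Suppose $\lambda_2\le\lambda_1$. Then $\Psi_1(\lambda_2)\ge\Psi_1(\lambda_1)=1$, which is a contradiction. Therefore $\lambda_1<\lambda_2$, that is, $\rho(G_1)<\rho(G_2)$.

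No step is a real obstacle. The only points to check are that the evaluation point lies in $(2,\infty)$, which was done above, and that $\Psi_i(\lambda)$ is well defined there. The latter holds because $\lambda(H_0)\le 2$ for a union of paths and cycles, so $\lambda I-A(H_0)$ is invertible for $\lambda>2$.
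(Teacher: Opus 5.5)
Your proof is correct and follows the paper's argument exactly: you evaluate at $\lambda_2=\rho(G_2)+1$, use $\Psi_1(\lambda_2)<\Psi_2(\lambda_2)=1=\Psi_1(\lambda_1)$, and invoke the strict monotonicity of $\Psi_1$ from Lemma~\ref{lem-2.2}(iii) to conclude $\lambda_1<\lambda_2$. Your extra checks, that $\lambda_2>2$ (via $\rho(G_2)\ge n\ge 4$) and that the $\Psi$-functions are well defined there, are small details the paper leaves implicit.
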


\begin{proof}
Setting $\lambda_i := \rho(G_i)+1$, we have $\Psi_{i}(\lambda_i)=1$. 
The assumption implies 
\[ \Psi_{1}(\lambda_2)< \Psi_{2}(\lambda_2)= 1= \Psi_{1}(\lambda_1). \] 
Since $\Psi_{1}(\lambda)$ is strictly decreasing by Lemma \ref{lem-2.2}, we have $\lambda_2>\lambda_1$, i.e., $\rho(G_2)>\rho(G_1)$. 
\end{proof}

\noindent
{\bf Remark.} 
By applying Theorem \ref{thm-comparison}, we no longer compare the increment of the distance spectral radius via the entries of Perron eigenvector and the Rayleigh quotient of $\rho (G)$.

\subsection{The increment of $\Phi$-functions on paths and cycles}

We now compute $\Phi_K(\lambda)$ in closed form for the two types of components relevant here.

\begin{lemma}\label{lem-Phi-cycle-path}
Let $\lambda>2$ and denote  $\theta:=\tfrac12\bigl(\lambda+\sqrt{\lambda^2-4}\bigr)>1$, so that $\lambda=\theta+\theta^{-1}$. 
\begin{enumerate}[(i)]
\item For any cycle $C_\ell$ with $\ell\ge 3$, we have 
\begin{equation*} 
\Phi_{C_\ell}(\lambda)\;=\;\frac{\ell}{\lambda-2}. 
\end{equation*}

\item For any path $P_k$ with $k\ge 2$, we have 
\begin{equation*}
\Phi_{P_k}(\lambda)\;=\;\frac{k}{\lambda-2} 
- \frac{2}{\lambda -2} R_k(\theta),\quad \text{where} \quad R_k(\theta):=\frac{1}{\theta-1}\!\left(1-\frac{1+\theta}{1+\theta^{k+1}}\right).
\end{equation*}
\end{enumerate}
\end{lemma}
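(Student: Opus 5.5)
The plan is to compute $\bm x:=(\lambda I-A(K))^{-1}\mathbf 1$ directly, i.e. to solve the linear system $(\lambda I - A(K))\bm x=\mathbf 1$, and then sum the entries of $\bm x$. By Lemma~\ref{lem-2.2}(iii) (or the Neumann series, Lemma~\ref{lem-Neumann}), $\lambda I-A(K)$ is invertible for $\lambda>2$, so any solution we exhibit is the solution.

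\textbf{Cycles.} For $C_\ell$ the matrix $A(C_\ell)$ has constant row sums $2$, so $A(C_\ell)\mathbf 1=2\mathbf 1$. Hence $(\lambda I-A(C_\ell))\mathbf 1=(\lambda-2)\mathbf 1$, giving $(\lambda I-A(C_\ell))^{-1}\mathbf 1=\frac{1}{\lambda-2}\mathbf 1$ and $\Phi_{C_\ell}(\lambda)=\frac{\ell}{\lambda-2}$. This is immediate.

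\textbf{Paths.} Label the vertices of $P_k$ as $1,\dots,k$. The system reads $\lambda x_i - x_{i-1}-x_{i+1}=1$ for $1\le i\le k$, with boundary convention $x_0=x_{k+1}=0$. It is a second-order linear recurrence with constant forcing. A particular solution is $x_i\equiv \frac{1}{\lambda-2}$. The homogeneous solutions are $\alpha\theta^i+\beta\theta^{-i}$, since $\theta,\theta^{-1}$ are the roots of $z^2-\lambda z+1=0$, distinct because $\lambda>2$. By the reflection symmetry $i\mapsto k+1-i$ and uniqueness of the solution, $x_i=x_{k+1-i}$. So we may write
\[
x_i=\frac{1}{\lambda-2}\Bigl(1-c\bigl(\theta^{\,i}+\theta^{\,k+1-i}\bigr)\Bigr),
\]
and impose $x_0=0$, which gives $c=\frac{1}{1+\theta^{k+1}}$; the condition $x_{k+1}=0$ then holds automatically.

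Summing over $i$ gives
\[
\sum_{i=1}^k x_i=\frac{1}{\lambda-2}\Bigl(k-\frac{2}{1+\theta^{k+1}}\sum_{i=1}^k\theta^i\Bigr)=\frac{1}{\lambda-2}\Bigl(k-\frac{2\theta(\theta^k-1)}{(\theta-1)(1+\theta^{k+1})}\Bigr).
\]
It remains to check that $\frac{\theta(\theta^k-1)}{(\theta-1)(1+\theta^{k+1})}=R_k(\theta)$. Indeed,
\[
1-\frac{1+\theta}{1+\theta^{k+1}}=\frac{\theta^{k+1}-\theta}{1+\theta^{k+1}}=\frac{\theta(\theta^k-1)}{1+\theta^{k+1}},
\]
and dividing by $\theta-1$ matches. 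This yields (ii).

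\textbf{Main obstacle.} The only delicate point is fitting the boundary conditions correctly. Imposing the symmetric ansatz avoids solving a $2\times 2$ system, and the algebraic verification against the stated form of $R_k(\theta)$ is routine. It is also worth sanity-checking the formula at $k=2$, where directly $\Phi_{P_2}(\lambda)=\frac{2}{\lambda-1}$.
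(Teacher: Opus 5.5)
Your proof is correct and follows essentially the same route as the paper. It handles the cycle case via $A(C_\ell)\mathbf 1=2\mathbf 1$, and the path case by solving $-x_{i-1}+\lambda x_i-x_{i+1}=1$ with $x_0=x_{k+1}=0$ as the constant particular solution $\frac{1}{\lambda-2}$ plus $\alpha\theta^i+\beta\theta^{-i}$, then summing a geometric series. The only difference is that your reflection-symmetry ansatz replaces the paper's explicit $2\times 2$ solve for the constants, and it gives exactly the paper's values $C_1=-\frac{1}{(\lambda-2)(1+\theta^{k+1})}$ and $C_2=\theta^{k+1}C_1$.
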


\begin{proof}[{\bf Proof of (i)}]
Since $C_\ell$ is $2$-regular, we have $A(C_\ell)\mathbf 1=2\mathbf 1$. Hence 
$$
(\lambda I - A(C_\ell))\mathbf 1=(\lambda-2)\mathbf 1.
$$
Since $\lambda>2$, the matrix $\lambda I - A(C_\ell)$ is invertible. So 
$$
(\lambda I - A(C_\ell))^{-1}\mathbf 1=\frac{1}{\lambda-2}\mathbf 1.
$$
Recall that $\mathbf 1^{\mathsf{T}}\mathbf 1=\ell$. We obtain $\Phi_{C_{\ell}}(\lambda) =\mathbf 1^{\mathsf{T}}\bigl(\lambda I - A(C_{\ell})\bigr)^{-1}\mathbf 1 =\frac{\ell}{\lambda -2}$. 

\vspace{2mm}
\noindent{\bf Proof of (ii).} 
Suppose that $\bm{y}=(y_1,\ldots ,y_k)^{\mathsf{T}}$ is a vector satisfying  $(\lambda I-A(P_k)) \bm{y}=\mathbf 1$. We need to compute $\mathbf 1^{\mathsf{T}} \bm{y} =\sum_{i=1}^k y_i$. 
For $2\le i\le k-1$, the equation $(\lambda I-A(P_k)) \bm{y} =\mathbf 1$ at $i$-th row reads
$$
-y_{i-1}+\lambda y_i - y_{i+1}=1.
$$
At the first and last rows ($i=1$ and $i=k$), the equations  gives 
$$
\lambda y_1 - y_2=1,\qquad -y_{k-1}+\lambda y_k=1.
$$
Introducing auxiliary values $y_0:=0$ and $y_{k+1}:=0$, so $y_0,y_1,\ldots,y_k,y_{k+1}$ satisfy 
\begin{equation}\label{e-3.3}
-y_{i-1}+\lambda y_i-y_{i+1}=1\qquad (1\le i\le k).  
\end{equation}

The characteristic equation of the recurrence $-y_{i-1}+\lambda y_i-y_{i+1}=0$ is given as $x^2-\lambda x +1=0$, which has two real roots $\theta$ and $\theta^{-1}$, where $\theta =\tfrac12\bigl(\lambda+\sqrt{\lambda^2-4}\bigr)$. 
A particular solution by setting $y_i\equiv c$, gives $(-1+\lambda-1)c=1$, i.e., $c=\frac{1}{\lambda-2}$. 
Thus, the general solution of (\ref{e-3.3}) is given as 
\begin{equation}\label{e-3.4}
y_i=\frac{1}{\lambda-2}+C_1\theta^i+C_2\theta^{-i}, \quad  (0\le i\le k+1)
\end{equation}
where $C_1,C_2\in\mathbb R$ are some constants determined later.

The boundary condition $y_0=0$ gives 
\begin{equation}\label{e-3.5}
0=\frac{1}{\lambda-2}+C_1+C_2.
\end{equation}
The condition $y_{k+1}=0$ gives
\begin{equation}\label{e-3.6}
0=\frac{1}{\lambda-2}+C_1\theta^{k+1}+C_2\theta^{-(k+1)}. 
\end{equation}
From (\ref{e-3.5}), we have $C_2=-\tfrac{1}{\lambda-2}-C_1$. Substituting into (\ref{e-3.6}) yields 
$$
0=\frac{1}{\lambda-2}+C_1\theta^{k+1}+\Bigl(-\frac{1}{\lambda-2}-C_1\Bigr)\theta^{-(k+1)}.
$$
Collecting terms, we have 
$$
C_1\bigl(\theta^{k+1}-\theta^{-(k+1)}\bigr)=-\frac{1}{\lambda-2}\bigl(1-\theta^{-(k+1)}\bigr).
$$
Multiplying by $\theta^{k+1}$ both sides, we obtain  
$C_1 {(\theta^{2(k+1)}-1)} 
=-\frac{1}{\lambda-2} {(\theta^{k+1}-1)} $, 
so
\begin{equation}\label{e-3.7}
C_1=-\frac{1}{\lambda-2}\cdot\frac{\theta^{k+1}-1}{\theta^{2(k+1)}-1} =\frac{-1}{(\lambda-2)(1+\theta^{k+1})}. 
\end{equation}
Substituting back into (\ref{e-3.5}), it follows that 
\begin{equation}\label{e-3.8}
C_2=-\frac{1}{\lambda-2}-C_1=\frac{-(1+\theta^{k+1})+1}{(\lambda-2)(1+\theta^{k+1})}=\frac{-\theta^{k+1}}{(\lambda-2)(1+\theta^{k+1})}.
\end{equation}
Combining with (\ref{e-3.4}), we conclude that 
$$
\sum_{i=1}^k y_i = \frac{k}{\lambda-2}+C_1\,S_1+C_2\,S_2,
$$
where $S_1:=\sum_{i=1}^k \theta^i=\frac{\theta(\theta^k-1)}{\theta-1}$ and $S_2:=\sum_{i=1}^k \theta^{-i}=\frac{\theta^k-1}{\theta^k(\theta-1)}$. 

Using (\ref{e-3.7}) and (\ref{e-3.8}), we have 
$$
\begin{aligned}
C_1 S_1+C_2 S_2
&=\frac{-(\theta^k-1)}{(\lambda-2)(\theta-1)(1+\theta^{k+1})}\cdot\Bigl(\theta+\frac{\theta^{k+1}}{\theta^k}\Bigr)\\
&=\frac{-(\theta^k-1)}{(\lambda-2)(\theta-1)(1+\theta^{k+1})}\cdot 2\theta 
= \frac{-2}{\lambda-2} \cdot 
\frac{\theta(\theta^k-1)}{(\theta-1)(1+\theta^{k+1})}.
\end{aligned}
$$ 
We now show that
\begin{equation*} 
\frac{\theta(\theta^k-1)}{(\theta-1)(1+\theta^{k+1})}\;=\;R_k(\theta).
\end{equation*} 
This can be verified by rewriting the left-hand side as 
\begin{align*}
\frac{\theta(\theta^k-1)}{(\theta-1)(1+\theta^{k+1})} =\frac{1}{\theta-1}\cdot\frac{(1+\theta^{k+1})-(1+\theta)}{1+\theta^{k+1}}=\frac{1}{\theta-1}\!\left(1-\frac{1+\theta}{1+\theta^{k+1}}\right)= R_k(\theta). 
\end{align*}
Putting all the above identities together, we conclude that 
$$
\Phi_{P_k}(\lambda) 
= \bm{1}^{\mathsf T} \big(\lambda I - A(P_k) \big)^{-1} \bm{1} = \bm{1}^{\mathsf T} \cdot \bm{y} 
=\sum_{i=1}^k y_i=\frac{k}{\lambda-2}-\frac{2}{\lambda-2}R_k(\theta).
$$
This proves the desired identity in part (ii).
\end{proof}

This section contains the main technical result of the paper.

\begin{thm}\label{thm-increment}
For every $\lambda>2$ and $k\ge 2$, let $\theta>1$ be the root of $\theta+\theta^{-1}=\lambda$. Then 
\begin{equation}\label{e-4.1}
\Phi_{P_{k+1}}(\lambda)-\Phi_{P_k}(\lambda)\;=\;\frac{1}{\lambda-2}\cdot\frac{(\theta^{k+1}-1)(\theta^{k+2}-1)}{(1+\theta^{k+1})(1+\theta^{k+2})}.
\end{equation}
Consequently, we obtain the following two statements: 
\begin{enumerate}[(i)]
\item For all $k\ge 2$ and $m\ge 1$, we have 
$0<\Phi_{P_{k+m}}(\lambda)-\Phi_{P_k}(\lambda)<\frac{m}{\lambda-2}$. 

\item The map $k\mapsto \Phi_{P_k}(\lambda)$ is strictly convex on $\{k\in \mathbb{N}: k\ge 2\}$.
\end{enumerate}
\end{thm}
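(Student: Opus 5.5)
The plan is to derive (\ref{e-4.1}) directly from the closed form in Lemma~\ref{lem-Phi-cycle-path}(ii). Write $a:=\theta^{k+1}$, so that $\theta^{k+2}=\theta a$. By Lemma~\ref{lem-Phi-cycle-path}(ii),
\[
\Phi_{P_{k+1}}(\lambda)-\Phi_{P_k}(\lambda)=\frac{1}{\lambda-2}\Bigl(1-2\bigl(R_{k+1}(\theta)-R_k(\theta)\bigr)\Bigr),
\qquad
R_{k+1}(\theta)-R_k(\theta)=\frac{1+\theta}{\theta-1}\Bigl(\frac{1}{1+a}-\frac{1}{1+\theta a}\Bigr)=\frac{(1+\theta)a}{(1+a)(1+\theta a)}.
\]
Hence the bracket equals
\[
\frac{(1+a)(1+\theta a)-2(1+\theta)a}{(1+a)(1+\theta a)}=\frac{1-a-\theta a+\theta a^2}{(1+a)(1+\theta a)}=\frac{(a-1)(\theta a-1)}{(1+a)(1+\theta a)},
\]
where the factorization is checked by expanding. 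This is exactly (\ref{e-4.1}).

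For (i), note that since $\theta>1$, each factor $\frac{\theta^{j}-1}{\theta^{j}+1}$ lies in $(0,1)$. So every one-step increment $\delta_k:=\Phi_{P_{k+1}}(\lambda)-\Phi_{P_k}(\lambda)$ lies strictly between $0$ and $\frac{1}{\lambda-2}$. Telescoping $\Phi_{P_{k+m}}(\lambda)-\Phi_{P_k}(\lambda)=\sum_{j=0}^{m-1}\delta_{k+j}$ then gives $0<\cdot<\frac{m}{\lambda-2}$.

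For (ii), strict convexity on the integers means $\delta_{k+1}>\delta_k$ for all $k\ge 2$. Write $f(x)=\frac{x-1}{x+1}$, which is strictly increasing for $x>0$. Then $(\lambda-2)\delta_k=f(\theta^{k+1})f(\theta^{k+2})$ and $(\lambda-2)\delta_{k+1}=f(\theta^{k+2})f(\theta^{k+3})$. Both factors are positive and $f(\theta^{k+3})>f(\theta^{k+1})$, so $\delta_{k+1}>\delta_k$.

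There is no genuine obstacle here. The only step needing care is the algebraic simplification leading to the factorization $(a-1)(\theta a-1)$; everything after it is monotonicity of $f$ and telescoping.
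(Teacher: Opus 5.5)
Your proposal is correct and follows essentially the same route as the paper: you derive the increment identity from the closed form in Lemma~\ref{lem-Phi-cycle-path}(ii) by computing $R_{k+1}-R_k$ and factoring the numerator as $(\theta^{k+1}-1)(\theta^{k+2}-1)$, then obtain (i) from each factor $\frac{\theta^j-1}{\theta^j+1}$ lying in $(0,1)$ together with telescoping, and (ii) from the monotonicity of $x\mapsto\frac{x-1}{x+1}$. The only cosmetic difference is in (ii), where you cancel the shared positive factor $f(\theta^{k+2})$ and compare $f(\theta^{k+3})$ with $f(\theta^{k+1})$, whereas the paper notes that both factors of $h(k)$ increase with $k$.
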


\begin{proof}
Applying Lemma \ref{lem-Phi-cycle-path} (ii), it follows that  
\begin{equation}\label{e-4.3}
\Phi_{P_{k+1}}(\lambda)-\Phi_{P_k}(\lambda) 
=\frac{1}{\lambda -2} - \frac{2}{\lambda-2} \big(R_{k+1} (\theta )-R_k (\theta ) \big). 
\end{equation}
We denote 
$ g(j):=\frac{1}{1+\theta^j}$. 
By definition, we get $R_k (\theta )=\tfrac{1}{\theta-1}\bigl(1-(1+\theta)g(k+1)\bigr)$, so
\begin{equation}\label{e-4.4}
R_{k+1} (\theta ) -R_k (\theta ) =-\frac{1+\theta}{\theta-1}\bigl(g(k+2)-g(k+1)\bigr).
\end{equation}
We now compute the difference $g(j+1)-g(j)$ and obtain 
$$
g(j+1)-g(j)=\frac{1}{1+\theta^{j+1}}-\frac{1}{1+\theta^j} =\frac{\theta^j-\theta^{j+1}}{(1+\theta^j)(1+\theta^{j+1})}=-\frac{\theta^j(\theta-1)}{(1+\theta^j)(1+\theta^{j+1})}.
$$
Replacing with $j=k+1$, we get 
$$
g(k+2)-g(k+1)=-\frac{\theta^{k+1}(\theta-1)}{(1+\theta^{k+1})(1+\theta^{k+2})}.
$$
Substituting into (\ref{e-4.4}), we obtain 
\begin{equation*} 
R_{k+1} (\theta ) -R_k (\theta )=-\frac{1+\theta}{\theta-1}\cdot\left(-\frac{\theta^{k+1}(\theta-1)}{(1+\theta^{k+1})(1+\theta^{k+2})}\right)=\frac{(1+\theta)\,\theta^{k+1}}{(1+\theta^{k+1})(1+\theta^{k+2})}.    
\end{equation*}
Now we need to compute $1-2(R_{k+1} (\theta )-R_k (\theta ))$. Note that 
$$ 
\begin{aligned}
1-2(R_{k+1} (\theta )-R_k (\theta ) ) =1-\frac{2(1+\theta)\theta^{k+1}}{(1+\theta^{k+1})(1+\theta^{k+2})} 
=\frac{(1+\theta^{k+1})(1+\theta^{k+2})-2(1+\theta)\theta^{k+1}}{(1+\theta^{k+1})(1+\theta^{k+2})},
\end{aligned}
$$
where the numerator can be expanded as 
$$
1+\theta^{k+1}+\theta^{k+2}+\theta^{2k+3} 
- 2\theta^{k+1}-2\theta^{k+2} 
=1-\theta^{k+1}-\theta^{k+2}+\theta^{2k+3} 
= (\theta^{k+1}-1)(\theta^{k+2}-1).
$$
Therefore, we conclude that 
\begin{equation}\label{e-4.6}
1-2(R_{k+1}-R_k)=\frac{(\theta^{k+1}-1)(\theta^{k+2}-1)}{(1+\theta^{k+1})(1+\theta^{k+2})}.
\end{equation}
Combining (\ref{e-4.6}) and (\ref{e-4.3}) yields the expected identity (\ref{e-4.1}).

\vspace{2mm}
\noindent{\bf Proof of (i).} 
For every $\theta>1$ and $j\ge 1$, we have 
$0<\frac{\theta^j-1}{1+\theta^j}<1$. 
Then 
$$
0\;<\;\frac{\theta^{k+1}-1}{1+\theta^{k+1}} \cdot 
\frac{\theta^{k+2}-1}{1+\theta^{k+2}}
\;<\;1.
$$
Combining this with (\ref{e-4.1}), we have 
$$
0<\Phi_{P_{k+1}}(\lambda)-\Phi_{P_k}(\lambda)<\frac{1}{\lambda-2}.
$$
Telescoping the above inequality from $k$ to $k+m$, we get 
$$
\Phi_{P_{k+m}}(\lambda)-\Phi_{P_k}(\lambda)=\sum_{j=0}^{m-1}\bigl(\Phi_{P_{k+j+1}}(\lambda)-\Phi_{P_{k+j}}(\lambda)\bigr) <
\frac{m}{\lambda-2}.
$$

\vspace{2mm}
\noindent{\bf Proof of (ii).} 
Using (\ref{e-4.1}), we denote the increment 
\begin{equation} \label{eq-Delta}
\Delta \Phi_k:=\Phi_{P_{k+1}}(\lambda)-\Phi_{P_k}(\lambda)=\frac{h(k)}{\lambda-2}, 
\end{equation}
where 
$$
h(k):=\frac{(\theta^{k+1}-1)(\theta^{k+2}-1)}{(1+\theta^{k+1})(1+\theta^{k+2})}=\prod_{j=1}^2 \frac{\theta^{k+j}-1}{\theta^{k+j}+1}= \prod_{j=1}^2 \left(1-\frac{2}{\theta^{k+j}+1}\right).
$$
For each $j\in\{1,2\}$, since $\theta>1$ makes $\theta^{k+j}$ strictly increasing in $k$, and $2/(\theta^{k+j}+1)$ strictly decreasing, then the factor $1-\frac{2}{\theta^{k+j}+1}$ is strictly increasing in $k$. Hence $h(k)$ is strictly increasing in $k$, giving $\Delta \Phi_{k+1}>\Delta \Phi_{k}$. 
Consequently, we obtain the convexity 
$\Phi_{P_{k+2}} (\lambda) +\Phi_{P_{k}} (\lambda) >2\Phi_{P_{k+1}} (\lambda)$.
\end{proof}

\subsection{Balancing principles}

We now derive two results: first, for a fixed total order and fixed number of parts, path lengths should be as balanced as possible (Lemma~\ref{lem-5.1}); second, enlarging the total order monotonically enlarges each part of the balanced partition (Lemma~\ref{lem-5.2}).

\begin{lemma}\label{lem-5.1}
Let $c\ge 1$ and $N\ge 2c$ be integers, and let $k_1,\ldots,k_c\ge 2$ be integers with $\sum_{i=1}^c k_i =N$. We denote $(k_i^{\ast})_{i=1}^c$ the balanced partition of $N$ into $c$ parts, i.e., with $q=\lfloor N/c\rfloor$ and $r=N-qc$, there are  $r$ copies of $q+1$ and $c-r$ copies of $q$. Then for every $\lambda>2$,
$$
\sum_{i=1}^c \Phi_{P_{k_i}}(\lambda)\;\ge\;\sum_{i=1}^c \Phi_{P_{k_i^{\ast}}}(\lambda),
$$
with equality if and only if $(k_i)_{i=1}^c$ is a permutation of $(k_i^{\ast})_{i=1}^c$.
\end{lemma}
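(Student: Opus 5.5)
The plan is a standard discrete convexity (smoothing) argument built on Theorem~\ref{thm-increment}(ii). Fix $\lambda>2$ and write $f(k):=\Phi_{P_k}(\lambda)$ for integers $k\ge 2$. By (\ref{eq-Delta}), $\Delta\Phi_k=f(k+1)-f(k)$ is strictly increasing in $k$. Telescoping then gives the key exchange inequality: whenever $a\ge b+2$ with $b\ge 2$,
\[
f(a)+f(b)-f(a-1)-f(b+1)=\Delta\Phi_{a-1}-\Delta\Phi_{b}>0,
\]
because $a-1\ge b+1>b$. So replacing a pair $(a,b)$ with $a-b\ge 2$ by $(a-1,b+1)$ strictly decreases the sum $\sum_i f(k_i)$. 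It keeps the total $N$ fixed and keeps all parts $\ge 2$, since $b+1\ge 3$ and $a-1\ge b+1\ge 3$.

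Next I run the smoothing. Start from any admissible $(k_i)$. As long as some two parts differ by at least $2$, apply the exchange above to the largest and the smallest part. This process terminates, because the potential $\sum_i k_i^2$ is a nonnegative integer and strictly decreases: $(a-1)^2+(b+1)^2-a^2-b^2=-2(a-b-1)<0$. When it stops, all parts differ by at most $1$. Such a vector with sum $N$ and $c$ parts must be a permutation of $(k_i^*)$. Indeed, if the values lie in $\{q',q'+1\}$ with $r'$ copies of $q'+1$ and $0\le r'<c$, then $N=q'c+r'$ forces $q'=q$ and $r'=r$. (If all parts are equal, take $r'=0$.) This yields the inequality $\sum_i f(k_i)\ge\sum_i f(k_i^*)$, since each step weakly (indeed strictly) decreases the sum.

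For the equality case: if $(k_i)$ is not a permutation of $(k_i^*)$, then some pair differs by at least $2$. At least one strict step therefore occurs, and the inequality is strict. The converse is trivial because the sum is symmetric. The hypothesis $N\ge 2c$ guarantees $q\ge 2$, so the balanced partition is itself admissible.

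The only mildly delicate point, which I expect to be the main obstacle, is ensuring that every intermediate configuration stays in the domain $k\ge 2$ where Theorem~\ref{thm-increment} applies, and that the exchange indices $b<a-1$ fall in that domain. Both follow from $b\ge 2$ as noted above, so no real difficulty arises.
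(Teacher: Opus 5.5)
Your proposal is correct and follows essentially the same route as the paper. Both use the exchange $(a,b)\mapsto(a-1,b+1)$ for $a\ge b+2$, which strictly decreases $\sum_i\Phi_{P_{k_i}}(\lambda)$ by the strict monotonicity of $\Delta\Phi_k$ in Theorem~\ref{thm-increment}(ii), with termination from the strictly decreasing potential $\sum_i k_i^2$; your checks that the terminal configuration is a permutation of $(k_i^{\ast})$ and that $N\ge 2c$ gives $q\ge 2$ fill in small details the paper leaves implicit.
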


\begin{proof}
 If $(k_i)_{i=1}^c$ is already a permutation of $(k_i^{\ast})_{i=1}^c$, then two sums 
are equal. Otherwise, there exist indices $i\ne i'$ with $k_i\ge k_{i'}+2$. We consider the operation: Replace 
$(k_i,k_{i'})$ by $(k_i-1,k_{i'}+1)$; the sum is changed only at these 
two indices, and the constraint $k_j\ge 2$ is preserved (since 
$k_i\ge k_{i'}+2\ge 4$ gives $k_i-1\ge 3$ and $k_{i'}+1\ge 3$). Thus, the effect on 
these two indices is 
$$
\left(\Phi_{P_{k_i-1}}(\lambda) 
+\Phi_{P_{k_{i'}+1}} (\lambda)\right)-\left(\Phi_{P_{k_i}} (\lambda) +\Phi_{P_{k_{i'}}} (\lambda )\right)
= \Delta \Phi_{k_{i'}} -\Delta \Phi_{k_i-1} ,
$$ 
where $\Delta \Phi_k$ is defined as in (\ref{eq-Delta}). 
Since $k_i-1\ge k_{i'}+1>k_{i'} \ge 2$, and 
by Theorem~\ref{thm-increment} (ii), we know that 
$\Delta \Phi_k$ is strictly increasing in $k$, we have $\Delta \Phi_{k_i-1}>\Delta \Phi_{k_{i'}}$. 
So we get 
\[  \Phi_{P_{k_i-1}} (\lambda) +\Phi_{P_{k_{i'}+1}} (\lambda)  \, < \,  \Phi_{P_{k_i}} (\lambda)+\Phi_{P_{k_{i'}}} (\lambda). \]
We conclude that the operation strictly decreases the sum $\sum_{i=1}^c \Phi_{P_{k_i}}(\lambda)$.

Using the above operation repeatedly, we can turn 
any partition $(k_i)_{i=1}^c$ to a balanced partition. 
Indeed, 
we consider the quantity $Q:=\sum_{i=1}^c k_i^2$. Since $k_i\ge k_{i'}+2$, we see that 
$$
\Delta Q=(k_i-1)^2+(k_{i'}+1)^2-k_i^2-k_{i'}^2=-2(k_i-k_{i'}-1)\le-2. 
$$
 Thus, $Q$ strictly decreases under each step of the operation. 
Since $Q \ge 0$, the operation process thereby terminates at finitely many times. The terminal configuration admits no pair $(k_i,k_{i'})$ with 
$k_i\ge k_{i'}+2$, i.e., it is a balanced partition of $N$. Since each operation strictly 
decreased the sum, the starting value $\sum_{i=1}^c \Phi_{P_{k_i}} (\lambda)$ strictly exceeds the 
final value $\sum_{i=1}^c \Phi_{P_{k_i^{\ast}}} (\lambda)$.
\end{proof}

\begin{lemma}\label{lem-5.2}
Let $c\ge 1$ and $N_1<N_2$ be integers with $N_1\ge 2c$. Write $(k_j^{(i)})_{j=1}^c$ for the balanced partition of $N_i$ into $c$ parts, sorted in non-decreasing order. Then for every $j\in\{1,\ldots,c\}$,
$$
k_j^{(1)}\;\le\; k_j^{(2)}.
$$
\end{lemma}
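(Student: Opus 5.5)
It suffices to treat the case $N_2=N_1+1$; the general case then follows by induction on $N_2-N_1$, since the hypothesis $N\ge 2c$ is preserved when $N$ increases and the inequality $k_j^{(1)}\le k_j^{(2)}$ is transitive along the chain $N_1<N_1+1<\cdots<N_2$. (In fact the hypothesis $N_1\ge 2c$ only guarantees that all parts are at least $2$; it plays no role in the comparison itself.)

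So write $N_1=qc+r$ with $q=\lfloor N_1/c\rfloor$ and $0\le r\le c-1$. The balanced partition of $N_1$ sorted in non-decreasing order is
$$
k_j^{(1)}=q \quad (1\le j\le c-r), \qquad k_j^{(1)}=q+1 \quad (c-r<j\le c).
$$
We split into two cases.

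\emph{Case $r\le c-2$.} Then $N_1+1=qc+(r+1)$ with $0\le r+1\le c-1$, so $\lfloor (N_1+1)/c\rfloor=q$ and the remainder is $r+1$. The sorted partition of $N_1+1$ is therefore $q$ in positions $1,\ldots,c-r-1$ and $q+1$ in positions $c-r,\ldots,c$. This differs from the partition of $N_1$ only at position $j=c-r$, where the entry goes from $q$ to $q+1$. Every other entry is unchanged, so $k_j^{(1)}\le k_j^{(2)}$ for all $j$.

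\emph{Case $r=c-1$.} Then $N_1+1=(q+1)c$, so the partition of $N_1+1$ has all $c$ parts equal to $q+1$. Every part of $N_1$ is at most $q+1$, and again $k_j^{(1)}\le k_j^{(2)}$ for all $j$.

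There is no real obstacle here. The only point needing care is the bookkeeping of which sorted positions carry $q$ versus $q+1$, and the observation that passing from $N$ to $N+1$ raises exactly one entry by $1$. As a sanity check, one can also verify the inequality directly from the closed form $k_j=\lfloor (N+j-1)/c\rfloor$ for the sorted balanced partition. This formula is clearly non-decreasing in $N$ for each fixed $j$, which gives the lemma in one line. I would present this formula as the main proof and the case analysis as its justification.
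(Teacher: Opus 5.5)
Your proof is correct, and it takes a somewhat different route from the paper's. The paper does not use unit steps or a closed formula. It compares $N_1$ and $N_2$ directly: writing $N_i=q_ic+r_i$, it splits into two cases.
\begin{itemize}
\item If $q_1<q_2$, every entry of the first list is at most $q_1+1\le q_2$, which is at most every entry of the second list.
\item If $q_1=q_2$, then $r_1<r_2$, so the second sorted list simply has more copies of $q+1$ in its tail.
\end{itemize}
Your unit-step induction arranges the same bookkeeping differently and works, including the degenerate case $c=1$, where only $r=c-1$ occurs. The paper's two-case split avoids the induction. Your version shows slightly more: a step $N\to N+1$ raises exactly one sorted entry by one.

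The closed form $k_j=\lfloor (N+j-1)/c\rfloor$ is the cleanest route of all, since the lemma becomes the obvious monotonicity of $N\mapsto\lfloor (N+j-1)/c\rfloor$. One presentational point: the unit-step case analysis is not a justification of that formula but an independent proof. The formula itself follows in one line. We have $\lfloor (qc+r+j-1)/c\rfloor=q+\lfloor (r+j-1)/c\rfloor$ with $0\le r+j-1\le 2c-2$, and this equals $q+1$ exactly when $j>c-r$.

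You are also right that $N_1\ge 2c$ plays no role in the comparison.
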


\begin{proof}
We denote $N_i=q_i c+r_i$ with $0\le r_i<c$. The sorted balanced partition of $N_i$ is
$$
(k_j^{(i)})_{j=1}^c=(q_i,\ldots,q_i,\,q_i+1,\ldots,q_i+1), 
$$
where $q_i$ appears $c-r_i$ times and $q_i+1$ appears $r_i$ times. 
Since $N_1<N_2$ and $c$ is fixed, we have $q_1\le q_2$.
If $q_1<q_2$, then every entry of $(k_j^{(1)})$ is at most $q_1+1\le q_2$. Moreover, every entry of $(k_j^{(2)})$ is at least $q_2$. Thus, we get  $k_j^{(1)}\le q_1+1\le q_2\le k_j^{(2)}$ for every $j$.

If $q_1=q_2=q$, then $N_1< N_2$ implies $r_1<r_2$. 
In this case, $N_1$ has the partition $\{q,\ldots ,q, q+1,\ldots ,q+1\}$, where $q+1$ appears $r_1$ times, and $N_2$ has the partition $\{q,\ldots ,q, q+1,\ldots ,q+1\}$, where $q+1$ appears $r_2$ times. Since $r_1< r_2$, we see that 
$k_j^{(1)}\le k_j^{(2)}$ for all $j$. 
\end{proof}

\subsection{Proof of Theorem~\ref{main}}

Our goal is to prove that the minimizer $G\cong \overline{P_{n,s+1}}$. 
By Theorem~\ref{thm:structure}, we know that $|V(G)|=n$, and each component of $\overline G$ is either a cycle or a non-trivial path, and there are exactly $s+1$ components being paths. 
Then we denote 
$$
H_0:=\overline G \;=\; C_{\ell_1}\cup\cdots\cup C_{\ell_t}\cup P_{k_1}\cup\cdots\cup P_{k_{s+1}},
$$
where $t\ge 0$, $\ell_i\ge 3$, $k_j\ge 2$ satisfying 
 $N_c:=\sum_{i=1}^t \ell_i$ and $N_p=\sum_{j=1}^{s+1} k_j =n-N_c$. 

Applying Lemmas~\ref{lem-2.2} and~\ref{lem-Phi-cycle-path} to $G$ and to $\overline{P_{n,s+1}}$, 
we obtain
\begin{equation} \label{eq-Psi-both}
\Psi_G(\lambda)=\frac{N_c}{\lambda-2}+\sum_{j=1}^{s+1}\Phi_{P_{k_j}}(\lambda), 
\qquad
\Psi_{\overline{P_{n,s+1}}}(\lambda)=\sum_{j=1}^{s+1}\Phi_{P_{k_j^{*}}}(\lambda),
\end{equation}
where $(k_j^{*})_{j=1}^{s+1}$ is the balanced partition of $n$ into $s+1$ parts.

\begin{claim} \label{claim:1}
Path lengths in $H_0$ must be the balanced partition of $N_p$.
\end{claim}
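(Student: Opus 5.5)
We argue by contradiction, using the minimality of $G$ together with the comparison principle of Theorem~\ref{thm-comparison}. Suppose the path lengths $(k_j)_{j=1}^{s+1}$ of $H_0$ do not form a permutation of the balanced partition of $N_p$ into $s+1$ parts. Let $(\tilde k_j)_{j=1}^{s+1}$ denote that balanced partition. Since every $k_j\ge 2$, we have $N_p\ge 2(s+1)$, so Lemma~\ref{lem-5.1} applies with $c=s+1$ and $N=N_p$. We then define the competitor
$$
\widetilde H := C_{\ell_1}\cup\cdots\cup C_{\ell_t}\cup P_{\tilde k_1}\cup\cdots\cup P_{\tilde k_{s+1}}, \qquad \widetilde G:=\overline{\widetilde H}.
$$
It keeps the same cycles and redistributes only the path vertices.

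The first step is to check that $\widetilde G\in\mathcal G(m)$ and that $\widetilde G$ satisfies the hypotheses of Lemma~\ref{lem-2.2}.
\begin{itemize}
\item[] $\widetilde H$ has $n$ vertices.
\item[] The number of edges of $\widetilde H$ is $N_c+N_p-(s+1)=n-s-1$, which is the same as for $H_0$, since a path on $k$ vertices has $k-1$ edges and a cycle on $\ell$ vertices has $\ell$ edges. Hence $e(\widetilde G)=\binom n2-(n-s-1)=m$.
\item[] $\widetilde H$ has at least $s+1\ge 2$ components, all of which are cycles or paths with $\tilde k_j\ge 2$ (because $N_p\ge 2(s+1)$). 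So Lemma~\ref{lem-2.2} gives that $\widetilde G$ is connected, and it applies to $\widetilde G$.
\end{itemize}

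Next we compare the $\Psi$-functions. By (\ref{eq-Psi-both}) applied to both graphs, for every $\lambda>2$ we have
$$
\Psi_{\widetilde G}(\lambda)-\Psi_G(\lambda)=\sum_{j}\Phi_{P_{\tilde k_j}}(\lambda)-\sum_j \Phi_{P_{k_j}}(\lambda),
$$
because the cycle contribution $N_c/(\lambda-2)$ cancels. By Lemma~\ref{lem-5.1}, this difference is strictly negative for every $\lambda>2$, since $(k_j)$ is not a permutation of $(\tilde k_j)$. Theorem~\ref{thm-comparison} then yields $\rho(\widetilde G)<\rho(G)$. This contradicts the minimality of $G$ in $\mathcal G(m)$, so the path lengths of $H_0$ are the balanced partition of $N_p$.

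The main obstacle is admissibility, not the spectral inequality, which follows directly from Lemma~\ref{lem-5.1} and Theorem~\ref{thm-comparison}. One must confirm that the rebalanced graph has exactly $m$ edges, is connected, and has diameter $2$. These hold because the number of path components $s+1$ and the total number of path vertices $N_p$ are both unchanged.
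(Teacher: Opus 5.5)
Your proposal is correct and matches the paper's proof. Both keep the cycles, replace the path lengths by the balanced partition of $N_p$, and combine Lemma~\ref{lem-5.1} with Theorem~\ref{thm-comparison} to contradict minimality. You are slightly more careful than the paper: you verify admissibility explicitly (the edge count, $N_p\ge 2(s+1)$, and the hypotheses of Lemma~\ref{lem-2.2}), and you use a separate symbol for the balanced partition of $N_p$, whereas the paper reuses $k_j^{\ast}$, which it had defined as the balanced partition of $n$.
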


\begin{proof}[Proof of Claim]
Suppose for the contradiction that $(k_j)_{j=1}^{s+1}$ is not a permutation of the balanced partition $(k_j^{\ast})_{j=1}^{s+1}$. Now, we construct a graph $G'$ with $\overline{G'}:=C_{\ell_1}\cup\cdots\cup C_{\ell_t}\cup P_{k_1^{\ast}}\cup\cdots\cup P_{k_{s+1}^{\ast}}$. Clearly, we have $G'\in \mathcal{G}(m)$. For every $\lambda>2$, we get  
$$
\Psi_{G'}(\lambda)-\Psi_G(\lambda)=\sum_{j=1}^{s+1}\Phi_{P_{k_j^{\ast}}}(\lambda)-\sum_{j=1}^{s+1}\Phi_{P_{k_j}}(\lambda)<0,  
$$
where the last inequality holds by using  Lemma~\ref{lem-5.1}. By the comparison principle in Theorem~\ref{thm-comparison}, we get $\rho(G')<\rho(G)$, which contradicts with the minimality of $G$. 
Hence, we conclude that $(k_j)_{j=1}^{s+1}$ is a permutation of $(k_j^{\ast})_{j=1}^{s+1}$. In other words, we have $|k_i-k_j|\le 1$ for all $i,j$.
\end{proof}

\begin{claim} \label{claim:2}
We have $t=0$.
\end{claim}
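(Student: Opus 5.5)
We argue by contradiction: suppose $t\ge 1$, so $N_c\ge 3$ and $N_p=n-N_c\le n-3$. By Claim~\ref{claim:1}, the path lengths $(k_j)$ form the balanced partition of $N_p$ into $s+1$ parts; sort them in non-decreasing order. The balanced partition $(k_j^*)$ of $n$ into $s+1$ parts is also sorted. The plan is to compare $\Psi_G$ with $\Psi_{\overline{P_{n,s+1}}}$ directly via (\ref{eq-Psi-both}) and to show that
\[
\Psi_{\overline{P_{n,s+1}}}(\lambda)<\Psi_G(\lambda)\quad\text{for every }\lambda>2 .
\]
Theorem~\ref{thm-comparison} then gives $\rho(\overline{P_{n,s+1}})<\rho(G)$. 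The graph $\overline{P_{n,s+1}}$ lies in $\mathcal G(m)$ and satisfies the hypotheses of Lemma~\ref{lem-2.2}: it has $s+1\ge 2$ components, and since $s<\frac{n-2}{2}$ each part has order at least $2$. This contradicts the minimality of $G$.

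The key steps are as follows.

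\emph{Step 1.} Check that Lemma~\ref{lem-5.2} applies with $c=s+1$, $N_1=N_p$ and $N_2=n$. The hypothesis $N_1\ge 2c$ holds because every $k_j\ge 2$. The lemma then gives $k_j\le k_j^*$ for every $j$. Set $m_j:=k_j^*-k_j\ge 0$; then $\sum_j m_j=n-N_p=N_c$.

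\emph{Step 2.} Apply Theorem~\ref{thm-increment}(i) to each index with $m_j\ge 1$, which gives
\[
\Phi_{P_{k_j^*}}(\lambda)-\Phi_{P_{k_j}}(\lambda)<\frac{m_j}{\lambda-2}.
\]
For indices with $m_j=0$ the difference is $0$.

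\emph{Step 3.} Sum over $j$. Since $N_c\ge 3$, at least one $m_j\ge 1$, so the inequality is strict:
\[
\Psi_{\overline{P_{n,s+1}}}(\lambda)-\sum_j\Phi_{P_{k_j}}(\lambda)<\frac{\sum_j m_j}{\lambda-2}=\frac{N_c}{\lambda-2}.
\]
Combined with (\ref{eq-Psi-both}), this is exactly $\Psi_{\overline{P_{n,s+1}}}(\lambda)<\Psi_G(\lambda)$.

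The main point needing care is the index-wise domination in Step 1. Theorem~\ref{thm-increment}(i) only compares paths $P_k\subseteq P_{k+m}$ with $k\ge 2$, so we need a matching between the parts of $N_p$ and of $n$ in which each part of $N_p$ is paired with a part of $n$ that is at least as large. Lemma~\ref{lem-5.2} is designed to supply this sorted-order domination, so the remaining work is to verify its hypotheses, in particular $N_p\ge 2(s+1)$ and $N_p<n$. After that, the cycle mass $N_c/(\lambda-2)$ is absorbed term by term by the strict increment bound, and no further estimates are required.
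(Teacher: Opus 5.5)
Your proposal is correct and follows the paper's proof essentially step for step. Lemma~\ref{lem-5.2} gives $k_j\le k_j^*$ index by index, Theorem~\ref{thm-increment}(i) bounds each increment strictly by $m_j/(\lambda-2)$, and summing absorbs the cycle term $N_c/(\lambda-2)$, so Theorem~\ref{thm-comparison} yields the contradiction. You also make explicit two checks the paper leaves implicit: that $N_p\ge 2(s+1)$, and that $s<\frac{n-2}{2}$ makes $\overline{P_{n,s+1}}$ satisfy the hypotheses of Lemma~\ref{lem-2.2}.
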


\begin{proof}[Proof of Claim]
Suppose for the contradiction that $t\ge 1$. Then $N_c=\sum_i\ell_i\ge 3t\ge 3$ and $N_p<n$. 
By Claim \ref{claim:1}, we may assume that $(k_j)_{j=1}^{s+1}$ is the balanced partition of $N_p$ sorted in non-decreasing order. 
Recall that $(k_j^{*})_{j=1}^{s+1}$ is the sorted balanced partition of $n$. Applying Lemma~\ref{lem-5.2} to $N_p< n$, we see that for every $j$, 
$$
m_j:=k_j^{*}-k_j\;\ge\; 0.
$$
Moreover, we have $\sum_{j=1}^{s+1} m_j=n-N_p=N_c\ge 3$, so at least one $m_j\ge 1$. 

We now compare $\Psi_G (\lambda)$ and $\Psi_{\overline{P_{n,s+1}}} (\lambda)$. Combining with (\ref{eq-Psi-both}), we get 
\begin{equation}\label{e-6.1}
\Psi_G(\lambda)-\Psi_{\overline{P_{n,s+1}}}(\lambda)=\frac{N_c}{\lambda-2}-\sum_{j=1}^{s+1}\bigl(\Phi_{P_{k_j^{*}}}(\lambda)-\Phi_{P_{k_j}}(\lambda)\bigr).    
\end{equation} 
By Theorem~\ref{thm-increment} (i), for every index $j$, 
we have the following two cases: 
\begin{itemize}
\item If $m_j=0$, then $\Phi_{P_{k_j^{*}}}(\lambda)-\Phi_{P_{k_j}}(\lambda)=0=\tfrac{m_j}{\lambda-2}$.

\item If $m_j\ge 1$, then $\Phi_{P_{k_j^{*}}}(\lambda)-\Phi_{P_{k_j}}(\lambda)<\tfrac{m_j}{\lambda-2}$  holds strictly.
\end{itemize}
Since at least one $m_j\ge 1$, summing over all $j$ gives
$$
\sum_{j=1}^{s+1}\bigl(\Phi_{P_{k_j^{*}}}(\lambda)-\Phi_{P_{k_j}}(\lambda)\bigr)\;<\;\sum_{j=1}^{s+1}\frac{m_j}{\lambda-2}\;=\;\frac{N_c}{\lambda-2}.
$$
Substituting into (\ref{e-6.1}), 
we obtain $\Psi_G(\lambda) >\Psi_{\overline{P_{n,s+1}}}(\lambda)$ 
for every $\lambda>2$. By the comparison principle in Theorem~\ref{thm-comparison}, we get $\rho(G)>\rho (\overline{P_{n,s+1}} )$, contradicting with the minimality of $G$.
\end{proof}

By Claims \ref{claim:1} and \ref{claim:2}, we have $t=0$, and 
the path lengths in $H_0$ form 
the balanced partition of $n$, which yields  
$H_0\cong P_{n,s+1}$, i.e. $G\cong \overline{P_{n,s+1}}$. 
This completes the proof of Theorem \ref{main}.

\section{A new proof for the 
range $s \ge \frac{1}{2}(n-2)$}

\label{sec-short-proof}

In previous section, we determined the spectral extremal graph in the difficult case for the range $1\le s < \frac{n-2}{2}$, and thereby solved Lin--Zhou's problem in Conjecture \ref{conj-LZ}. 
The range $s\ge \max\{\frac{n-6}{2} ,1\}$ was already confirmed by Lin and Zhou \cite{LZ2026} by using the Rayleigh quotient technique. 
In this range, we cannot apply the structural result stated in Theorem \ref{thm:structure}, so that we cannot use the previous lemmas for the increment analysis of $\Phi$-functions on paths and cycles. 

In this section, we provide another application of our approach developed in this paper. 
Applying the comparison principle in Theorem \ref{thm-comparison}, we give a new proof of Lin--Zhou's result for the range $s\ge \frac{n-2}{2}$ by combining $\Psi$-functions with walk enumeration. 
Counting the number of walks in a graph turns out to be powerful in studying spectral extremal problems; see, e.g., \cite{Niki2002cpc, LLZ-edge-spectral, FLLM2025}.

\smallskip 
Before comparing the $\Psi$-functions, we need to bound the diameter of $G$.

\begin{lemma} \label{lem-diam}
If $G$ is a graph on $n$ vertices with $m > \binom{n-1}{2}$ edges, then $\mathrm{diam}(G) \le 2$.
\end{lemma}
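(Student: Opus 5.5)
The plan is a direct counting argument on common neighbours. Fix two distinct vertices $u,v$ of $G$. If $uv\in E(G)$ there is nothing to prove, so assume $uv\notin E(G)$. We will show that $u$ and $v$ have a common neighbour, which gives $d_G(u,v)=2$. This also shows that $G$ is connected, so that $\mathrm{diam}(G)$ is well defined.

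The key step is to count the edges of the complement $\overline G$, which has at most $\binom{n}{2}-m<\binom{n}{2}-\binom{n-1}{2}=n-1$ edges. Hence $e(\overline G)\le n-2$. For each of the $n-2$ vertices $w\notin\{u,v\}$, suppose $w$ is not a common neighbour of $u$ and $v$. Then at least one of $uw$, $vw$ lies in $E(\overline G)$. These complement edges are distinct for distinct $w$, and all of them differ from the edge $uv\in E(\overline G)$. If no $w$ were a common neighbour, $\overline G$ would therefore contain at least $(n-2)+1=n-1$ edges, contradicting $e(\overline G)\le n-2$. So some $w$ is adjacent to both $u$ and $v$, and $d_G(u,v)\le 2$.

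The statement is elementary, and no earlier result of the paper is needed. The only point requiring care is to include the complement edge $uv$ itself in the count; this is exactly what makes the strict inequality $m>\binom{n-1}{2}$ sufficient. A small degenerate case is $n\le 2$, where the hypothesis forces $G=K_2$ (or $K_1$) and the claim is trivial. I do not expect a genuine obstacle here.
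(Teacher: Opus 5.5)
Your argument is correct and is essentially the paper's proof: both show that two non-adjacent vertices $u,v$ with no common neighbour force at least $n-1$ non-edges meeting $\{u,v\}$, contradicting $e(\overline{G})=\binom{n}{2}-m\le n-2$. The difference is only bookkeeping. The paper counts the $n-1-d(u)$ non-neighbours of $u$ plus the $d(u)$ missing pairs $vx$ with $x\in N(u)$, whereas you charge one missing pair to each $w\notin\{u,v\}$ and then add $uv$ itself; your direct formulation also makes connectivity explicit, which the paper covers implicitly by allowing $d_G(u,v)=\infty$.
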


\begin{proof}
Suppose for contradiction that $\mathrm{diam}(G) \ge 3$. There exist $u, v \in V(G)$ such that distance $d_G(u, v) \ge 3$. Thus $u$ and $v$ are non-adjacent, and $N(u) \cap N(v) = \emptyset$. We now count the missing edges (non-edges) in $G$. Observe that $u$ is non-adjacent to $n - 1 - d(u)$ vertices. In addition, $v$ is non-adjacent to $d(u)$ vertices of $N(u)$. So the total number of non-edges of $u,v$ is at least $(n - 1 - d(u)) + d(u) = n - 1$. 
Then we have $e(G)\le \binom{n}{2} - (n - 1) = \binom{n-1}{2}$, contradicting with the assumption $m > \binom{n-1}{2}$. Thus, the diameter of $G$ is at most $2$.
\end{proof}

Recall that $e(G)=m={n-1 \choose 2} +s$. 
Since $s \ge 1$, we have $m > \binom{n-1}{2}$.
Lemma \ref{lem-diam} guarantees $\mathrm{diam}(G) \le 2$, which implies for any two distinct $u,v\in V(G)$,  $d_{G}(u,v)=1$ if and only if $uv\in E(G)$;  and $d_{G}(u,v)=2$ if and only if $uv\notin E(G)$. 
Let $H_0 = \overline{G}$ denote the complement of $G$. 
We conclude that $D(G)=J-I +A(H_0)$. 
Thus, the conclusions of Lemma \ref{lem-2.2} remain valid. Consequently, the $\Psi$-function of $G$ in  Definition \ref{defn} is well-defined, and Theorem \ref{thm-comparison} is still applicable. 

\smallskip 
 Note that $e := e(H_0) = \binom{n}{2} - m = n - 1 - s$. For the range $s \ge \frac{n-2}{2}$, we have $e \le  \frac{n}{2}$ and  
\[ P_{n,s+1} \cong  e P_2 \cup (n-2e) K_1. \] 
Using the Neumann series in Lemma \ref{lem-Neumann}, 
we expand the $\Psi$-function as 
\begin{equation} \label{eq-expand}
\Psi_G(\lambda) = \mathbf{1}^{\mathsf{T}}\bigl(\lambda I - A(H_0)\bigr)^{-1}\mathbf{1} = \sum_{k=0}^{\infty} \frac{\mathbf{1}^{\mathsf{T}} A(H_0)^k \mathbf{1}}{\lambda^{k+1}}.
\end{equation}
Let $w_k(H_0) := \mathbf{1}^{\mathsf{T}} A(H_0)^k \mathbf{1}$ denote the total number of walks of length $k$ in $H_0$. 

\begin{lemma} \label{lem-walk}
Let $H_0$ be a graph on $n$ vertices with $n-s-1$ edges, where $s\ge \frac{n-2}{2}$. Then $w_k(H_0) \ge w_k(P_{n,s+1})$ for every $k \ge 1$. If $H_0 \not\cong P_{n,s+1}$, then $w_k(H_0) > w_k(P_{n,s+1})$ for all $k \ge 2$.
\end{lemma}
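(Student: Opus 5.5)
The comparison rests on two facts. The target graph $P_{n,s+1}\cong eP_2\cup(n-2e)K_1$, with $e=n-1-s\le n/2$, has an explicitly computable walk count. And any graph with $e$ edges has at least this many walks of each length $k\ge 1$, with strict inequality for $k\ge 2$ unless the graph is a perfect matching of $e$ edges plus isolated vertices.

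\textbf{Step 1: the target.} In $P_{n,s+1}$ each $K_2$ component has $A^k\mathbf 1=\mathbf 1$ on its two vertices, and isolated vertices contribute nothing for $k\ge1$. Hence
\[ w_k(P_{n,s+1})=2e \quad\text{for all } k\ge 1. \]

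\textbf{Step 2: the case $k=1$.} Here $w_1(H_0)=\mathbf 1^{\mathsf T}A(H_0)\mathbf 1=2e(H_0)=2e$ for every $H_0$ with $e$ edges, so equality holds.

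\textbf{Step 3: the case $k\ge 2$.} I would use the monotonicity of walk counts. By Cauchy--Schwarz, $w_{2j}=\|A^j\mathbf 1\|^2$ and $w_{j}^2=(\mathbf 1^{\mathsf T}A^j\mathbf 1)^2\le n\,w_{2j}$, but that alone is too weak. It is simpler to argue directly. Set $\mathbf d=A\mathbf 1\ge 0$, so $w_2=\sum_v d(v)^2\ge\sum_v d(v)=2e$, because $d(v)^2\ge d(v)$ for nonnegative integers. Equality holds iff every degree is $0$ or $1$, i.e.\ $H_0$ is a matching, which with $e$ edges forces $H_0\cong eP_2\cup(n-2e)K_1$. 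For general $k$, I would pick an edge $uv$ and note that the walks alternating $u,v,u,\dots$ already give $2$ walks of length $k$ per edge (one starting at each end). These walks are distinct across edges, since a walk determines its first edge. So $w_k(H_0)\ge 2e$ for all $k\ge1$.

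\textbf{Step 4: strictness.} If $H_0\not\cong P_{n,s+1}$, then $H_0$ is not a matching, so some vertex $x$ has two neighbours $y\ne z$. For $k\ge2$, consider the walk $y,x,z$ extended by alternating along $xz$ (i.e.\ $y,x,z,x,z,\dots$ of length $k$). It does not alternate on a single edge, since it uses both $yx$ and $xz$, so it is an additional walk not counted in Step 3. Therefore $w_k(H_0)\ge 2e+1>w_k(P_{n,s+1})$.

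The step needing the most care is making the injection in Step 3 rigorous, namely checking that the alternating walks are pairwise distinct and distinct from the extra walk in Step 4. Each alternating walk is determined by its ordered first edge, and the extra walk uses two distinct edges, so this follows. The hypothesis $s\ge\frac{n-2}{2}$ enters only to guarantee $e\le n/2$, so that $P_{n,s+1}$ is a matching plus isolated vertices. Once the lemma is established, it feeds into (\ref{eq-expand}) and Theorem \ref{thm-comparison}.
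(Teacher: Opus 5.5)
Your proposal is correct and follows essentially the same argument as the paper. Both show $w_k(P_{n,s+1})=2e$, obtain $w_k(H_0)\ge 2e$ from the two alternating walks on each edge, and, when $H_0$ is not a matching (equivalently $H_0\not\cong eP_2\cup(n-2e)K_1$), take a vertex with two distinct neighbours to get one extra walk through three distinct vertices for every $k\ge 2$. Your explicit injectivity check is a slightly more careful version of the paper's one-line justification: each alternating walk is determined by its first ordered edge, while the extra walk visits three vertices. The $w_2$ and Cauchy--Schwarz remarks are not needed.
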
 

\begin{proof}
Note that $s\ge \frac{n-2}{2}$ and $P_{n,s+1}$ consists of some disjoint edges and isolated vertices. The isolated vertices contribute no walks, and each edge contributes exactly two walks of length $k$. So $w_k(P_{n,s+1}) = 2e$ for all $k \ge 1$. 
In graph $H_0$, every edge admits at least two trivial walks of length $k$ that use only the two endpoints.  Thus, we get $w_k(H_0) \ge 2e = w_k(P_{n,s+1})$. 
If $H_0 \not\cong P_{n,s+1}$, then there exist three vertices such that $x\sim y \sim z$, which yields many walk of length $k$ starting from $x$ (or $y$ or $z$) and alternating between $y$ and $z$. So $w_k(H_0) \ge 2e + 1 > w_k(P_{n,s+1})$ for all $k \ge 2$.
\end{proof}

Applying Lemma \ref{lem-walk}, if $H_0 \not\cong P_{n,s+1}$, then using the identity (\ref{eq-expand}) gives  
\begin{equation*}
\Psi_G(\lambda) - \Psi_{\overline{P_{n,s+1}}}(\lambda) = \sum_{k=0}^{\infty} \frac{w_k(H_0) - w_k(P_{n,s+1})}{\lambda^{k+1}}. 
\end{equation*}
Since $w_0(H_0) = w_0(P_{n,s+1}) = n$, $w_1(H_0) = w_1(P_{n,s+1}) = 2e$ and $w_k(H_0) - w_k(P_{n,s+1}) > 0$ for all $k \ge 2$, we get $\Psi_G(\lambda) > \Psi_{\overline{P_{n,s+1}}}(\lambda)$. By Theorem \ref{thm-comparison}, we obtain $\rho(G) > \rho(\overline{P_{n,s+1}})$, as desired.


\begin{thebibliography}{99}
\bibitem{Alon2021}
N. Alon,  S.M. Cioab\u{a}, B.D. Gilbert, 
J.H. Koolen, B.D. McKay, 
Addressing Johnson graphs, complete multipartite graphs, odd cycles, and random graphs,  Exp. Math., 30 (2021) 372--382.

\bibitem{ah14} 
M. Aouchiche, P. Hansen, Distance spectra of graphs: a survey, 
Linear Algebra Appl. 458 (2014) 301--386.

\bibitem{Bha1997} 
R. Bhatia, Matrix Analysis, GTM 169, Springer-Verlag, New York, 1997. 

\bibitem{BH1985}
R.A. Brualdi, A.J. Hoffman, 
On the spectral radius of $(0, 1)$-matrices,
 Linear Algebra Appl. 65 (1985) 133--146. 

\bibitem{CW2025}
Y.-J. Cheng,  C.-W. Weng, 
A matrix realization of spectral bounds, 
J. Combin. Theory Ser. B 174 (2025) 1--27. 

\bibitem{FLLM2025}
L. Fang, Y. Li, H. Lin, J. Ma, 
Spectral supersaturation for color-critical graphs, 
 (2025), arXiv:2512.22482. 

\bibitem{Fri1985}
 S. Friedland, The maximal eigenvalue of 0-1  matrices with prescribed number of ones, 
 Linear Algebra Appl. 69 (1985) 33--69.

\bibitem{GP1971}
R.L. Graham, H.O. Pollak, 
On the addressing problem for loop switching, Bell Syst. Tech. J. 50 (1971) 2495--2519.

\bibitem{GHH1977}
 R.L. Graham, A.J. Hoffman, H. Hosoya, On the distance matrix of a directed graph, 
 J. Graph Theory 1 (1977) 85--88.

\bibitem{GL1978}
R.L. Graham, L. Lovász, 
Distance matrix polynomials of trees, 
Adv. Math. 29 (1978) 60--88. 

\bibitem{LZZ2025}
S. Li, S. Zhao, L. Zou, 
Spectral extrema of graphs with fixed size: forbidden a fan graph, a friendship graph, or a theta graph, 
J. Graph Theory 110 (4) (2025) 483--495.

\bibitem{LZS2024}
X. Li, M. Zhai, J. Shu, 
A Brualdi--Hoffman--Tur\'{a}n problem on cycles, 
European J. Combin. 120 (2024), Paper No. 103966. 

 \bibitem{LLZ2024-book-4-cycle}
Y. Li, H. Liu, S. Zhang, 
More on Nosal's spectral theorem: Books and 4-cycles, J. Combin. Theory Ser. B 179 (2026) 219--249. 

\bibitem{LLZ-edge-spectral}
Y. Li, H. Liu, S. Zhang, 
An edge-spectral Erd\H{o}s--Stone--Simonovits theorem and its stability, 
 (2025), arXiv:2508.15271. 

\bibitem{LLZ-edge-color-critical}
Y. Li, H. Liu, S. Zhang, 
Edge-spectral Tur\'{a}n theorems for color-critical graphs with applications, 
 (2025), arXiv:2511.15431. 

\bibitem{LSXZ2021}
H. Lin, J. Shu, J. Xue, Y. Zhang, 
A survey on distance spectra of graphs, 
Adv. Math. (China) 50 (1) (2021) 29--76.

\bibitem{LZ2026} 
H. Lin, B. Zhou, Extremal distance spectral radius of graphs with fixed size, Adv. in Appl. Math. 173 (2026), Paper No. 102980. 

\bibitem{Niki2002cpc} 
V. Nikiforov, 
Some inequalities for the largest eigenvalue of a graph, 
Combin. Probab. Comput. 11 (2002) 179--189. 

\bibitem{Row1988}
P. Rowlinson, 
On the maximal index of graphs with a prescribed number of edges, 
Linear Algebra Appl. 110 (1988) 43--53.

\bibitem{RP1990}  
S.N. Ruzieh, D.L. Powers, The distance spectrum of the path $P_n$ and the first distance eigenvector of connected graphs, Linear Multilinear Algebra  28(1990) 75--81.

\bibitem{Saw2016}
M. Sawa,  On a symmetric representation of Hermitian matrices and its applications to graph theory, 
J. Combin. Theory Ser. B 116 (2016) 484--503.

\bibitem{Sta1987}
 R.P. Stanley, A bound on the spectral radius of graphs with $e$ edges, Linear Algebra Appl. 87 (1987) 267--269.

\bibitem{WIS2012}
S. Watanabe, K. Ishii, M. Sawa, 
A $Q$-analogue of the addressing problem of graphs by Graham and Pollak, 
SIAM J. Discrete Math. 26 (2) (2012) 527--536.

\bibitem{SI2010}
D. Stevanović, A. Ilić, Distance spectral radius of trees with fixed maximum degree, Electron. J.
Linear Algebra 20 (2010) 168--179. 


\end{thebibliography}
\end{document}